\def\tsc#1{\csdef{#1}{\textsc{\lowercase{#1}}\xspace}}
\newcommand{\qednew}{\hfill {\small $\blacksquare$} \medskip}
\newtheorem{theorem}{Theorem}[section]
\newdefinition{rmk}{Remark}
\newtheorem{corollary}[theorem]{Corollary}
\newtheorem{claim-proof}[thm]{Claim}
\newproof{pot}{Proof of Theorem \ref{thm}}
\begin{document}
\let\WriteBookmarks\relax
\def\floatpagepagefraction{1}
\def\textpagefraction{.001}

% Short title
\shorttitle{Quadruple Roman Domination}

% Short author
\shortauthors{V.S.R. Palagiri, G.P. Sharma, I.G. Yero}

% Main title of the paper
\title [mode = title]{Complexity Issues Concerning the Quadruple Roman
Domination Problem in Graphs}

% Title footnote mark
% eg: \tnotemark[1]
% \tnotemark[1]

% Title footnote 1.
% eg: \tnotetext[1]{Title footnote text}
% \tnotetext[1]{}

% First author
%
% Options: Use if required
% eg: \author[1,3]{Author Name}[type=editor,
%       style=chinese,
%       auid=000,
%       bioid=1,
%       prefix=Sir,
%       orcid=0000-0000-0000-0000,
%       facebook=<facebook id>,
%       twitter=<twitter id>,
%       linkedin=<linkedin id>,
%       gplus=<gplus id>]

\author[1]{Venkata Subba Reddy Palagiri}%[<options>]

% Corresponding author indication
\cormark[1]

% Footnote of the first author
\fnmark[1]

% Email id of the first author
\ead{pvsr@nitw.ac.in}

% URL of the first author
% \ead[url]{pvsr@nitw.ac.in}

% Credit authorship
% eg: \credit{Conceptualization of this study, Methodology, Software}
% \credit{}

% Address/affiliation

\author[1]{Guru Pratap Sharma}%[]

% Footnote of the second author
\fnmark[2]

% Email id of the second author
\ead{gpcs20111@student.nitw.ac.in}

% URL of the second author
% \ead[url]{}

% Credit authorship
% \credit{}

\author[2]{Ismael G. Yero}

\fnmark[3]

\ead{ismael.gonzalez@uca.es}

\affiliation[1]{organization={Department of Computer Science and Engineering},
            addressline={},
            city={Hanamkonda},
%          citysep={}, % Uncomment if no comma needed between city and postcode
            postcode={506004},
            state={Telangana},
            country={India}}

% Address/affiliation
\affiliation[label2]{organization={Department of Mathematics, Universidad de Cádiz},%Department and Organization
            addressline={Algeciras Campus},
            country={Spain}}

% Corresponding author text
\cortext[1]{Corresponding author}

\fntext[1]{ORCID: \href{https://orcid.org/0000-0002-0972-1141}{0000-0002-0972-1141}}

\fntext[3]{ORCID: \href{https://orcid.org/0000-0002-1619-1572}{0000-0002-1619-1572}}
% Footnote text
% \fntext[1]{}

% For a title note without a number/mark
%\nonumnote{}

% Here goes the abstract
\begin{abstract}
Given a  graph $G$ with vertex set $V(G)$, a mapping $h : V(G) \rightarrow \lbrace 0, 1, 2, 3, 4, 5 \rbrace$ is called a quadruple Roman dominating function (4RDF) for $G$ if it holds the following. Every vertex $x$ such that $h(x)\in \{0,1,2, 3\}$ satisfies that $h(N[x]) = \sum_{v\in N[x]} h(v) \geq |\{y:y \in N(x) \; \text{and} \; h(y) \neq 0\}|+4$, where $N(x)$  and $N[x]$ stands for the open and closed neighborhood of $x$, respectively. The smallest possible weight $\sum_{x \in V(G)} h(x)$ among all possible 4RDFs $h$ for $G$ is the quadruple Roman domination number of $G$, denoted by  $\gamma_{[4R]}(G)$.

This work is focused on complexity aspects for the problem of computing the value of this parameter for several graph classes. Specifically, it is shown that the decision problem concerning $\gamma_{[4R]}(G)$ is NP-complete when restricted to star convex bipartite, comb convex bipartite, split and planar graphs. In contrast, it is also proved that such problem can be efficiently solved for threshold graphs where an exact solution is demonstrated, while for graphs having an efficient dominating set, tight upper and lower bounds in terms of the classical domination number are given. In addition, some  approximation results to the problem are given. That is, we show that the problem cannot be approximated within $(1 - \epsilon) \ln |V|$ for any $\epsilon > 0$ unless $P=NP$.  An approximation algorithm for it is proposed, and its APX-completeness proved, whether graphs of maximum degree four are considered. Finally, an integer linear programming formulation for our problem is presented.%% Remove this line from your manuscript.
\end{abstract}

% Use if graphical abstract is present
%\begin{graphicalabstract}
%\includegraphics{}
%\end{graphicalabstract}

% Research highlights
% \begin{highlights}
% \item
% \item
% \item
% \end{highlights}

% Keywords
% Each keyword is seperated by \sep
\begin{keywords}
 Quadruple Roman domination\sep Roman domination\sep Graph classes\sep NP-complete\sep APX-complete
\end{keywords}

\ExplSyntaxOn
\keys_set:nn { stm / mktitle } { nologo }
\ExplSyntaxOff

\maketitle

% Main text
\section{Introduction}
%%%%%%%%%%%%%%%%%%%%%%%%%%%%%%%%%%%%%%%%%%%%%%%%%%%%%%%%%%%

The topic of domination in graphs is a classical one in the area of graph theory. The amount of results, information, open questions, etc on this research is huge. One can easily see this fact by simply referring to the books on domination \cite{Bresar,Chartrand,hhh-1,hhh-2,hhh,Haynes2,Haynes,Hen-Yeo}, some of which have very recently appeared. Moreover, thousands of scientific articles can be found in the literature concerning the domination theory. Despite these facts, there are still numerous open questions that are nowadays not yet settled, as well as, many different ongoing research lines that are seeing their birth every day. Some classical problems on domination are the ones concerning defense strategies for the vertices of a graph. One of the most popular of such strategies is that known as Roman domination, which is commonly understood to have some roots in the ancient Roman Empire, and thus its historical name (see \cite{Revelle2000,Stewart1999}). We are aimed in this work to considering some computational aspects of a problem concerning one of these defense strategies. Our work combines combinatorial, computational and defense strategies that are nowadays usually of interest for the graph theory community.

Throughout our whole exposition, the graphs $G$ considered are undirected, connected and simple, and have vertex set $V(G)$ and edge set $E(G)$. Edges of $G$ are denoted by $uv$ where $u,v$ are vertices in $V(G)$. The \textit{open neighbourhood} of a vertex $v\in V(G)$ is \begin{math} N(v)=\{u\,|\,uv \in E(G)\} \end{math}, and the \textit{closed neighbourhood} is $N[v]=N(v) \cup \{v\}$. The maximum degree of $G$ is $\Delta(G)=\max\{|N(v)\,|\,v\in V(G)\}$. An \textit{induced subgraph} associated with a subset $S$ of vertices of $G$ is denoted by $G[S]$. A complete bipartite graph $K_{1,n}$ is called a \textit{star graph}, and the vertex of the star with degree $n$ is called the \textit{center}. A \textit{comb graph} is constructed from the path $P_n$ ($n\ge 2$) by adding a pendant vertex to each vertex of such path. If $G$ is a bipartite graph with bipartition sets $V_1$ and $V_2$, and there exists a tree $T$ with vertex set $V_2$ such that for each $v\in V_1$, the open neighborhood $N(v)$ of $v$ in $G$ induces a connected subgraph of $T$, then $G$ is called a \textit{tree convex bipartite graph}. Particular situations of this definition are as follows. The tree convex bipartite graph $G$ is called a \textit{comb} (resp. \textit{star}) \textit{convex bipartite graph} if $T$ is a comb (resp. star). For other graph theoretic terminology we refer to \cite{west}.

The classical concepts in domination theory are as follows. A set $D\subset V(G)$ of a graph $G$ is known to be a \textit{dominating set} of such graph if $ \cup_{u \in D} N[u]=V(G)$, i.e., every vertex which is not in the set $D$ has at least one neighbor in $D$. This classical concept seems to have been first presented in \cite{Ore}, and right now several different domination related structures have been described in such a way that they are all giving more insight into the classical one, as well as, between themselves, see the already mentioned books \cite{Bresar,Chartrand,hhh-1,hhh-2,hhh,Haynes2,Haynes,Hen-Yeo}.

As mentioned, the Roman domination notion was formally introduced in graph theory based on protection strategies used to defend the Roman Empire, see \cite{Cockayne}. That is, a Roman dominating function (RDF) of a graph $G$ is a function $g:V(G) \rightarrow \{0,1,2\}$ such that for every vertex $x\in V(G)$ with $g(x)=0$ there exists a neighbor $y\in N(x)$ with $g(y)=2$. After this seminal work, various Roman domination strategies have been introduced to protect a given set of vertices of a graph under different constraints, see for instances \cite{RD1,RD2}. One of the ideas for this variations was focused on increasing the maximum label used in the considered functions. For examples, in 2016 Chellali et al. \cite{Chellali} introduced the Roman $\{2\}$-domination; the triple Roman domination was introduced by Ahangar et al. in 2021 \cite{trd}; and quadruple Roman domination was introduced by Amjadi et al. in 2021 \cite{qrdp}. Several other similar variations are known, but we prefer to refer to just the ones closer to our work, which focuses on the latter one of these mentioned variants.

A function \begin{math} f  : V\rightarrow \{0,1,2,3,4,5\} \end{math} on the vertex set of a graph $G=(V(G),E(G))$ is called a \textit{quadruple Roman dominating function} (shortly 4RDF) for $G$, if every vertex $x$ such that $f(x)\in \{0,1,2, 3\}$ satisfies that $f(N[x]) = \sum_{v\in N[x]} f(v) \geq |\{y:y \in N(x) \; \text{and} \; f(y) \neq 0\}|+4$. The weight of a 4RDF is $f(V(G))=\sum_{v\in V(G)}f(v)$ and the \textit{quadruple Roman domination number} of $G$, denoted by \begin{math} \gamma_{4R}(G) \end{math}, is the minimum weight among all possible 4RDFs for $G$. We consider in this work the minimum quadruple Roman domination problem (shortly M4RDP) on graphs, which is that of determining $\gamma_{4R}(G)$ for a graph $G$, as well as, the decision version regarding it, i.e., the one next formally stated.

\begin{center}
\fbox{
	\parbox{0.9\textwidth}{
		\textsc{\bf Decision Version of Quadruple Roman Domination Problem (4RDP)} \\
		\textit{Instance}: A simple, undirected graph $G$ and an integer $l \geq 0$.\\
		\textit{Question}: Is $\gamma_{4R}(G) \leq l$?}}
\end{center}

We must recall that, in \cite{qrdp}, it has been proved that the 4RDP is NP-complete for chordal graphs and  bipartite graphs and a few open questions were left for further investigation. Also, upper and lower bounds on the quadruple Roman domination number of trees were obtained in \cite{qrdp1}. In connection with complexity and computational aspects of the 4RDP, we consider here the 4RDP in some subclasses of bipartite graphs and planar graphs, which shows that even introducing more restrictions to this problem, it still remains very challenging. In consequence, we also investigate the M4RDP from combinatorial and approximation points of view. In addition, in order to possibly develop in future some heuristics on the M4RDP, we describe at the end of our exposition, an integer linear programming formulation of the M4RDP.

\section{NP-completeness results}\label{complexity}
%%%%%%%%%%%%%%%%%%%%%%%%%%%%%%%%%%%%%%%%%%%%%%%%%%%%%%555%

In order to present our results, we shall use the following well known NP-complete problem, as shown in \cite{ci}.

\begin{center}
\fbox{
	\parbox{0.97\textwidth}{
		\textsc{\bf EXACT-3-COVER (ETC)} \\
		\textit{Instance}: A finite set $X$ of cardinality $3r$ and a collection $C$ of subsets of $3$ elements of $X$.\\
		\textit{Question}: Does $C$ have a subset $C^ \prime$ of cardinality $r$ which contains all elements of $X$?}}
\end{center}

\noindent
First notice that given an integer $s$ and a function $f$ of any graph $G$, it can be verified in polynomial time whether $f$ is a 4RDF of weight at most $s$ in $G$. Hence, the inclusion of the 4RDP in the class NP is warranted for any of the graph classes we consider in our work.

As pointed out by one of the reviewers, we remark that in parallel (and independently) with our work, the NP-complete results proved for star and comb convex bipartite graphs from this section have been recently published in \cite{Valen}, as a part of the more general setting of $[k]$-Roman domination in graphs. However, the reductions made there are using the 3-SAT problem, while we show here that, the EXACT-3-COVER (ETC) can be also used in this direction. We are even able to find simpler arguments than the ones used in \cite{Valen}.

\begin{theorem} \label{theorem1}
The 4RDP is NP-complete	even when restricted to star convex bipartite graphs.
\end{theorem}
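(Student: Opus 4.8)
The plan is to give a polynomial-time reduction from \textsc{Exact-3-Cover} (ETC), membership of the 4RDP in NP having already been observed above. Given an ETC instance with $X=\{x_1,\dots,x_{3r}\}$ and a collection $C=\{C_1,\dots,C_m\}$ of $3$-element subsets (we may assume every element of $X$ lies in some member of $C$, else the instance is trivially a no-instance), I would build a graph $G$ as follows. Put $V_2=\{c\}\cup\{a_1,\dots,a_{3r}\}$, where $a_i$ represents $x_i$ and $c$ plays the role of the centre of a star; put into $V_1$ a vertex $s_j$ for each $C_j$ together with two extra pendant vertices $p_1,p_2$. Join $s_j$ to $c$ and to each $a_i$ with $x_i\in C_j$, and join each $p_t$ only to $c$. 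Taking $T$ to be the star on $V_2$ with centre $c$ and leaves $a_1,\dots,a_{3r}$, every $N(s_j)$ contains $c$ and every $N(p_t)=\{c\}$, so each such neighbourhood is connected in $T$; hence $G$ is star convex bipartite, connected, and built in polynomial time. I set the target weight to $l=5(r+1)$ and claim $\gamma_{4R}(G)\le l$ if and only if the ETC instance is a yes-instance.

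For the forward implication, from an exact cover $C'\subseteq C$ of size $r$ I assign $f(c)=5$, $f(s_j)=5$ for every $C_j\in C'$, and $f=0$ elsewhere. A direct check of the four vertex types confirms that $f$ is a 4RDF: the centre and the chosen set-vertices carry weight $\ge 4$ and need no condition; each pendant and each unchosen set-vertex has the single nonzero neighbour $c$ with $f(c)=5$, meeting $5\ge 1+4$; and each $a_i$ has exactly one chosen set-neighbour of weight $5$, again meeting $5\ge 1+4$. The total weight is $5+5r=l$.

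For the converse I take any 4RDF $f$ of weight at most $l$ and extract an exact cover by forcing two lower bounds to be simultaneously tight. First, analysing the pendants gives $f(c)+f(p_1)+f(p_2)\ge 5$ for every 4RDF, with equality only when $f(c)=5$ and $f(p_1)=f(p_2)=0$: if $f(c)\le 4$, then each pendant (whose only neighbour is $c$) is forced to value at least $\max\{1,5-f(c)\}\ge 1$, and the two pendants already raise the gadget cost to at least $6$. Second, I bound $W=\sum_j f(s_j)+\sum_i f(a_i)$. For every leaf with $f(a_i)\le 3$ the defining inequality forces a nonzero neighbour and hence $f(N[a_i])\ge 5$, while a leaf with $f(a_i)\ge 4$ contributes at least $4$ directly; summing over all $3r$ leaves and using that each set-value $f(s_j)$ is counted exactly three times, via the identity $\sum_i\sum_{j:x_i\in C_j}f(s_j)=3\sum_j f(s_j)$, yields after rearrangement $W\ge 5r$. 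Adding the two bounds shows every 4RDF has weight at least $5(r+1)=l$.

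Since the given weight is at most $l$, both bounds are tight: $f(c)=5$, both pendants vanish, every $a_i$ has value $0$, $\sum_j f(s_j)=5r$, and $f(N[a_i])=5$ for all $i$. The last equality forces each leaf to have exactly one nonzero set-neighbour, necessarily of value $5$; hence the value-$5$ set-vertices are pairwise disjoint on the elements, number exactly $r$, and cover $X$, i.e. they form an exact cover. I expect the converse bound to be the main obstacle, specifically ruling out configurations that spread small weights across several overlapping set-vertices or place positive weight on the element-leaves; the double-counting identity together with the per-leaf estimate $f(N[a_i])\ge 5$ is exactly what eliminates these and pins the optimum at the exact-cover configuration. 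Calibrating the centre gadget to precisely two pendants, so that $f(c)$ is provably driven to $5$ while the gadget contributes only $5$ to the optimum, is the other point requiring care.
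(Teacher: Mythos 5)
Your reduction is correct, and it follows the same template as the paper's proof (a reduction from \textsc{Exact-3-Cover} to a star convex bipartite graph in which the set-vertices hang off a central hub that is forced to receive label $5$), but the gadget and the counting are genuinely different. The paper attaches a pendant $y_i$ to every element vertex $x_i$, forces $f(y_i)=4$ for all $i$ and $f(C_j)=2$ on the chosen sets, uses $14r+6$ pendants at the hub, and targets weight $14r+5$; the converse direction then needs a multi-stage argument (first $g(x_i)=0$, then $g(y_i)=4$, with a case analysis on $a \bmod 3$). You instead dispense with the $y_i$'s entirely, let the chosen set-vertices carry label $5$, use only two pendants at the hub, and target $5(r+1)$; the converse collapses to two independent lower bounds, the second obtained by summing $f(N[a_i])\ge 5$ over the leaves and double-counting each $f(s_j)$ three times. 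I checked the one delicate point: the naive rearrangement of your summation only gives $W\ge 5r-\alpha/3$ where $\alpha$ is the number of leaves with $f(a_i)\ge 4$, so you must also feed in $\sum_i f(a_i)\ge 4\alpha$ (which you do mention as the ``contributes at least $4$ directly'' term); writing $W=\tfrac{2}{3}\sum_i f(a_i)+\tfrac{1}{3}\bigl(\sum_i f(a_i)+3\sum_j f(s_j)\bigr)\ge \tfrac{8\alpha}{3}+\tfrac{15r-\alpha}{3}=5r+\tfrac{7\alpha}{3}$ makes the rearrangement explicit and shows equality forces $\alpha=0$ and all $f(a_i)=0$, after which the extraction of the exact cover goes through exactly as you describe. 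Your version buys a noticeably shorter and more transparent tightness analysis at no cost in generality; the paper's heavier gadget is not needed for this theorem (it is closer in spirit to what the comb convex case requires). Do spell out the rearrangement above in the final write-up, and keep the remark that instances with an uncovered element of $X$ are mapped to a fixed no-instance.
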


\begin{proof}
We consider an instance $X=\lbrace x_1, x_2, \ldots, x_{3r} \rbrace$ and $C=\lbrace C_1, C_2,\ldots, C_s \rbrace$ of ETC, and develop a reduction to an instance of the 4RDP, as given below. To this end, we construct a star convex bipartite graph $G_{r,s}$ in polynomial time to be used in the reduction.

For each $x_i\in X$, we include vertices $x_i$ and $y_i$, and for each $C_i \in C$ we create a vertex $C_i$. Then we add vertices $b$, $b_1, b_2, \ldots,b_{14r+6}$. We now form a star graph with vertex set $\{b,b_1,b_2,\ldots,b_{14r+6}\}$ with $b$ as the center vertex. Also, we create another star graph  with vertex set $\{b,C_1,C_2,\ldots,C_{s}\}$ and again $b$ as the center vertex. Next, we add edges between $x_i$ and $C_j$, if $x_i \in C_j$. Finally, to obtain $G_{r,s}$, we add an edge between $x_i$ and $y_i$ for every $x_i\in X$. Figure \ref{fig:sg-1} shows a sketch of the graph constructed in this way. Notice that such graph is bipartite.

\begin{figure}[htbp]
    \centering
    \begin{subfigure}[b]{0.35\textwidth}
        \centering
        \includegraphics[width=5cm,height=5cm]{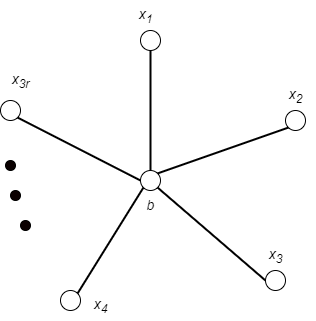}
        \caption{Star graph}
        \label{fig:scbp}
    \end{subfigure}%
    \hfill
    \begin{subfigure}[b]{0.6\textwidth}
        \centering
        \includegraphics[width=6cm,height=7cm]{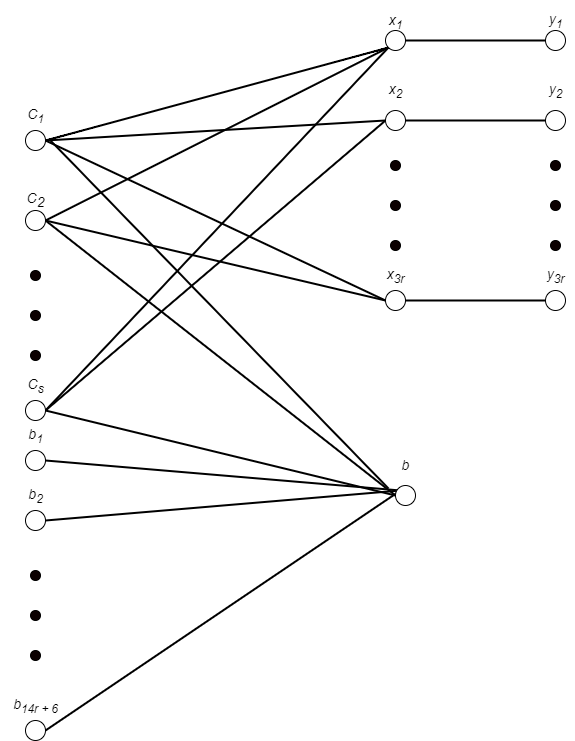}
        \caption{A sketch of the star convex bipartite graph $G_{r,s}$.}
        \label{fig:sg-1}
    \end{subfigure}
    \caption{Comparison of the Star graph and its corresponding star convex bipartite graph.}
    \label{fig:comparison}
\end{figure}

Let $P = \lbrace b, x_1, x_{2},\ldots,x_{3r}\rbrace$ and $Q = V(G_{r,s}) \setminus P$ (notice that $P$ and $Q$ are the bipartition sets of $G_{r,s}$). Consider now a star $S_{1,3r}$ on $3r+1$ vertices with vertex set $P$ as shown in Figure \ref{fig:scbp}. Observe that for any vertex $w\in Q$, the vertices of its open neighborhood $N(w)$ in $G_{r,s}$ (which is a subset of $P$) induce a star that is a subgraph of the defined star $S_{1,3r}$. Hence, the constructed graph $G_{r,s}$ is star convex bipartite. We next show that the corresponding instance of ETC has a positive solution if and only if $G_{r,s}$ has a 4RDF with weight at most $14r+5$.

Assume $C^\prime\subset C$ with cardinality $r$ represents a positive solution for ETC. We define a function $f : V(G_{r,s}) \rightarrow \{0, 1, 2,3,4,5\}$ with weight $14r+5$ as follows. 		
		\begin{equation}\label{stareq}
		f(u)=
		\begin{cases}
		5, & \text{if}\ u=b, \\
		4, & \text{if}\ u \in \{y_1,y_2,\ldots,y_{3r}\}, \\
		2, & \text{if}\ u \in C^\prime, \\
		0, & \text{otherwise}.
		\end{cases}
		\end{equation}	
We claim that $f$ is a quadruple Roman dominating function. That is as follows if we consider those vertices with labels at most three.
\begin{itemize}
    \item For any $i\in \{1,\dots,14r+6\}$, it holds $f(N[b_i]) = \sum_{v\in N[b_i]} f(v) = 5 = |\{y:y \in N(b_i) \; \text{and} \; f(y) \neq 0\}|+4$.
    \item For any $i\in \{1,\dots,3r\}$, it holds $f(N[x_i]) = \sum_{v\in N[x_i]} f(v) = 6 = |\{y:y \in N(x_i) \; \text{and} \; f(y) \neq 0\}|+4$.
    \item If $C_j\in C'$, then $f(N[C_j]) = \sum_{v\in N[C_j]} f(v) = 7 > 5 = |\{y:y \in N(C_j) \; \text{and} \; f(y) \neq 0\}|+4$.
    \item If $C_j\notin C'$, then $f(N[C_j]) = \sum_{v\in N[C_j]} f(v) = 5 = |\{y:y \in N(C_j) \; \text{and} \; f(y) \neq 0\}|+4$.
\end{itemize}
Thus, our claim follows, and we obtain that $\gamma_{4R}(G_{r,s})\le f(V(G_{r,s}))= 14r+5$ as desired.

On the other hand, let $t = 14r+5$, and consider a 4RDF $g$ for $G_{r,s}$ with the smallest possible weight at most $t$. Since $\gamma_{4R}(G_{r,s}) \leq t=14r+5$ and there are $14r+6$ vertices of type $b_i$, there must be at least one $b_j$ ($1\le j\le 14r+6$) such that $g(b_j)=0$. This implies that $g(b)=5$. Hence, based on this fact, if there is $b_k$ ($k\ne j$) such that $g(b_k)\ne 0$, then we can define a new 4RDF $g'$ such that $g'(x)=g(x)$ for any $x\in V(G_{r,s})\setminus\{v_k\}$ and $g'(b_k)=0$, which has smaller weight than $g$. This contradicts our assumption on the minimality of $g$. Thus,  $g(b) = 5$ and $g(b_i)= 0$ for every $1 \leq i \leq 14r+6$. We now require the following claim. From now on, whenever we write $x_i \in C_j$ we refer to the vertices $x_i$ and $C_j$ corresponding to the variable $x_i\in X$ that belong to the set $C_j\in C$.
%Since $G[\{b,C_1,C_2,\ldots,C_{s}\}]$ is a star and $g(b)=5$, the claim below holds.

\begin{claim-proof}
\label{claim-1}
For each $x_i \in X$ it holds $g(x_i) = 0$.
\end{claim-proof}

\noindent
\textbf{Proof of Claim \ref{claim-1}:}
Suppose that the number of vertices of type $x_i$ having positive label is $m\geq 1$. Hence, the number of $x_i$'s with label equal to zero is $3r-m$. Thus, one of the following situations occurs.
\begin{itemize}
    \item[(i)] $g(x_i) \neq 0$, which means $g(x_i)+g(y_i) \geq 5$.
    \item[(ii)] $g(x_i) = 0$, which leads to $g(y_i) \geq 4$.
\end{itemize}
Now, let $a = |\{y_i\,:\,g(y_i) = 4 \; \text{and} \; 1 \leq i \leq 3r\}|$. Clearly, for each $x_i$ with $g(x_i) = 0$ and $g(y_i) = 4$ there must be a vertex $C_j\in N(x_i)$ with $g(C_j) \geq 2$. Thus, the required number of vertices of type $C_j$ with $g(C_j) \geq 2$ is $\left\lceil \frac{a}{3} \right\rceil$ (notice that each $C_j$ has only three neighbors of type $x_i$). Consequently,
$$g(V(G_{r,s})) \geq 5+5m+4a+5(3r-m-a)+2\left\lceil \frac{a}{3} \right\rceil.$$
\noindent 
 Now, we consider three cases. \\
\textbf{Case 1:} $a \mod 3 =0$. It follows that $2\left\lceil \frac{a}{3} \right\rceil=2\frac{a}{3}$. Thus,
$$g(V(G_{r,s}))  \geq 5+5m+4a+5(3r-m-a)+2\left\lceil \frac{a}{3} \right\rceil
=14r+5+r-\frac{a}{3}.$$
\textbf{Case 2:} $a \mod 3 =1$. It follows that $2\left\lceil \frac{a}{3} \right\rceil=2\left(\frac{a+2}{3}\right)$. Hence
$$g(V(G_{r,s})) \geq 5+5m+4a+5(3r-m-a)+2\left\lceil \frac{a}{3} \right\rceil=14r+5+r-a+2\left(\frac{a+2}{3}\right)=14r+5+r-\frac{a}{3}+\frac{4}{3}.$$
\textbf{Case 3:} $a \mod 3 =2$. It follows that $2\left\lceil \frac{a}{3} \right\rceil=2\left(\frac{a+1}{3}\right)$. Hence
$$g(V(G_{r,s})) \geq 5+5m+4a+5(3r-m-a)+2\left\lceil \frac{a}{3} \right\rceil=14r+5+r-a+2\left(\frac{a+1}{3}\right)=14r+5+r-\frac{a}{3}+\frac{2}{3}.$$
\noindent Since $m \geq 1$ and $a \leq 3r-m$, it follows that $r-\frac{a}{3} >0$, $r-\frac{a}{3}+ \frac{4}{3}>0$, $r-\frac{a}{3}+ \frac{2}{3}>0$, and so, from the inequality above, we deduce $g(V) \geq 14r+5+r-\frac{a}{3} > 14r+5=t$, which is a contradiction. Thus, the proof of Claim \ref{claim-1} is completed.\qednew

\smallskip
From the claim above, it follows that  $g(y_i) \geq 4$ for every $1 \leq i \leq 3r$. Suppose there exist $y_k$ such that $g(y_k) = 5$, and let us count such vertices. Consider a vertex $x_i$ with $1 \leq i \leq 3r$. Since $g(x_i)=0$ (by the previous claim) and $g(y_i) \geq 4$, we deduce that
$$g(y_i) + \sum_{x_i\in C_j} g(C_j) = \sum_{v\in N[x_i]} g(v) = g(N[x_i])\ge |\{y:y \in N(x_i) \; \text{and} \; g(y) \neq 0\}|+4.$$

Note that any $x_i$ such that $g(C_j)=0$ for every $C_j$ with $x_i\in C_j$, satisfies that $g(y_i)= 5$. Moreover, for any vertex $x_i$ such that $g(C_j)\ne 0$ for some $C_j$ with $x_i\in C_j$ it holds that either $g(C_j)\ge 2$, or ($g(C_j)=1$ and $g(y_i)=5$). For otherwise (that is $g(C_j)=1$ and $g(y_i)=4$), we have $g(N[x_i]) = \sum_{v\in N[x_i]} g(v)=g(y_i)+g(C_j)=5<6 = |\{y:y \in N(x_i) \; \text{and} \; g(y) \neq 0\}|+4$, which is a contradiction. In this sense, since $g$ has weight at most $14r+5$, $g(b)=5$, and $g(y_i) \geq 4$ for every $1 \leq i \leq 3r$, the number of $y_i$ such that $g(y_i)=5$ is at most $K\le 2r$. This means there must be at least $R\ge r$ vertices $y_j$ such that $g(y_j)=4$. But then, the corresponding vertices $x_j$ adjacent to such $y_j$ (with $g(y_j)=4$) must have neighbors $C_k$ with $g(C_k)\ge 2$. Altogether, this leads to say that $g$ has weight $g(V(G_{r,s}))= 5+5K+4R+2\left\lceil R/3\right\rceil=5+5(3r-R)+4R+2\left\lceil R/3\right\rceil=5+14r+r-\left\lceil R/3\right\rceil>5+14r$, since $R\ge r$, and this is a contradiction. Hence, we have that for every $y_i$, with $1 \leq i \leq 3r$, it follows $g(y_i) = 4$.

As a consequence of the previous arguments, each $x_i$ must have a neighbor $C_j$ such that $x_i\in C_j$ with $g(C_j)=2$. Since  $g$ has weight at most $14r+5$, $g(b)=5$, and $g(y_i)=4$ for every $1 \leq i \leq 3r$, the number of vertices of type $C_j$ with $g(C_j)=2$ must be at least $r$. However, it must be indeed equal to $r$, since otherwise there will be a vertex $x_i$ that will have no neighbor $C_j$ such that $x_i\in C_j$ with $g(C_j)=2$. This means that each $x_i$ has exactly one such neighbor with label $2$ under $g$. Therefore, the set of such vertices $C_j$, such that $g(C_j)=2$, represents a positive solution for the ETC instance, and the proof is completed.
\end{proof}

We next consider another family of bipartite graphs where the 4RDP remains NP-complete.

\begin{theorem}
\label{theorem2}
The 4RDP is NP-complete	even when restricted to comb convex bipartite graphs.
\end{theorem}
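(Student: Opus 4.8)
The plan is to prove NP-completeness by a polynomial reduction from ETC, following the very same template as the proof of Theorem~\ref{theorem1}. As already observed, membership in NP is immediate, so the whole task reduces to building, from an ETC instance $X=\{x_1,\dots,x_{3r}\}$ and $C=\{C_1,\dots,C_s\}$, a \emph{comb} convex bipartite graph $H_{r,s}$ together with an integer $t$ such that the instance is positive if and only if $\gamma_{4R}(H_{r,s})\le t$. I would keep the skeleton of $G_{r,s}$ essentially intact: the element vertices $x_i$, their private pendants $y_i$ (forcing $x_i$ to be covered either by $y_i$ with label $4$ or, more cheaply in aggregate, by a selected set), the set vertices $C_j$, and a padding of pendant vertices whose only role is to pin down the labels in a minimum 4RDF.

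The single genuine difficulty is the auxiliary tree. In $G_{r,s}$ each set neighbourhood $N(C_j)=\{b,x_{i_1},x_{i_2},x_{i_3}\}$ was turned into a connected star by routing every $C_j$ through one common centre $b$. This device is unavailable for a comb: a comb has maximum degree three, so no vertex of $V_2$ can be joined, in the auxiliary tree, to all of $x_1,\dots,x_{3r}$; and a short argument (a vertex at tree-distance two from some $x_i$ would force a fixed internal vertex into every triple containing $x_i$, which arbitrary ETC instances do not allow) shows that the sets $\{b,x_{i_1},x_{i_2},x_{i_3}\}$ cannot be convex for every triple simultaneously unless such a universal vertex is present. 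Hence the comb must be engineered so that each $N(C_j)$ becomes a connected subtree \emph{that is also short}, the latter being essential here because, in the definition of a 4RDF, every non-zero neighbour adds one unit to the demand $|\{y\in N(x):f(y)\ne0\}|+4$, so a large neighbourhood would inflate the weight and blur the exact-cover threshold.

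My proposed resolution is to realize the comb as a path of dummy spine vertices bearing element vertices as teeth, and to \emph{localise} each set: for every $C_j$ I would place private copies of its three members as three \emph{consecutive} teeth, join $C_j$ only to those three teeth and the three spine vertices beneath them (so that $N(C_j)$ is one short spine block with three pendants, hence convex and of bounded size), re-provide the domination that $b$ formerly supplied by a dedicated high-label spine vertex inside each block, and add a small subgraph that identifies copies of a common element so that ``covering $x_i$'' is shared across the sets containing it. Taking $T$ to be this comb, and noting that all remaining $V_1$-vertices (the $y_i$, the padding pendants and the dummy pendants) have one-vertex neighbourhoods, would certify that $H_{r,s}$ is comb convex bipartite. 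This attachment is harmless for the reduction, since no $C_j$ is joined to a real element outside its own members, and since $r$ triples can cover $3r$ elements only when pairwise disjoint, a cover of size $r$ is automatically exact.

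With $H_{r,s}$ and $t$ fixed, the equivalence copies the two halves of the proof of Theorem~\ref{theorem1}. For the forward direction I would extend the function of~(\ref{stareq})---$b\mapsto5$, $y_i\mapsto4$, each selected set $\mapsto2$---by fixed labels on the dummy spine, its pendants and the identification gadget, then verify the 4RDF inequalities case by case and read off $t$. For the converse I would rerun the minimality analysis: the analogue of Claim~\ref{claim-1} (every $x_i$ receives $0$, with the identical case split on $a\bmod 3$ and the same $\lceil a/3\rceil$ counting), then $g(y_i)=4$ for all $i$, and finally that exactly $r$ set vertices carry label $2$, these forming the desired exact cover. I expect the comb step itself to be the main obstacle: choosing the dummy spine, the localising copies, the per-block dominator and the copy-identification so that convexity, the bounded-neighbourhood requirement, the exact value of $t$ and the correct shared-coverage semantics all hold at once, and so that the added vertices neither admit a cheaper 4RDF nor produce a spurious cover. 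Once this bookkeeping is settled, the rest is a routine adaptation of the arguments above.
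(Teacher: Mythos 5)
There is a genuine gap, and it stems from a false premise. You correctly observe that a comb has maximum degree three, so no single hub can play the role that $b$ plays in Theorem \ref{theorem1}; but you then conclude that each $N(C_j)$ must induce a \emph{short} subtree of the comb, on the grounds that a large neighbourhood would inflate the demand $|\{y\in N(x):f(y)\ne 0\}|+4$ and blur the threshold. That requirement is not needed: a connected subgraph of a comb may well be the entire spine, and the extra demand created by nonzero neighbours is harmless because every nonzero neighbour contributes at least $2$ to $f(N[x])$ while adding only $1$ to the demand. The paper exploits exactly this: it duplicates the pendant structure (adding $x_i',y_i'$ for each $i$ and a second hub $b'$), joins every $C_j$ to $b$, to $b'$ and to \emph{all} of $x_1',\dots,x_{3r}'$, and takes the comb with spine $b',x_1',\dots,x_{3r}'$ and teeth $b,x_1,\dots,x_{3r}$. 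Then $N(C_j)$ contains the whole spine together with some teeth, hence induces a connected subgraph, and the only price is the mild hypothesis $r\ge 4$ needed to verify the 4RDF condition at $b'$ and at the $x_i'$ (for instance $f(N[b'])=2r\ge r+4$). With threshold $26r+5$, both directions then run essentially as in Theorem \ref{theorem1}.

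Your alternative---private copies of each element placed as consecutive teeth in a per-set block, plus an ``identification gadget'' tying together the copies of a common element---leaves the gadget entirely unspecified, and that is precisely where the proof would fail. Any vertex identifying two copies of $x_i$ lying in blocks of different sets must sit on the $V_2$ side of the bipartition, so its neighbourhood has to induce a connected subtree of the comb; since those copies are far apart on the spine, that neighbourhood must again swallow a long stretch of spine, which is the very feature you set out to avoid. If instead the copies are tied together only through weight accounting, nothing prevents a 4RDF of weight $t$ from covering two different copies of the same element via two different selected sets while leaving some other element's copies uncovered, so a cheap 4RDF need not yield an exact cover. Without a concrete gadget, a concrete value of $t$, and a verification that the gadget neither admits a cheaper labelling nor destroys comb convexity, the reduction is not established; the claim that ``the rest is routine'' conceals the one step that actually carries the theorem.
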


\begin{proof}
We proceed similarly to Theorem \ref{theorem1} by reducing an instance $X=\lbrace x_1, x_2, \ldots, x_{3r} \rbrace$ and $C=\lbrace C_1, C_2,\ldots, C_s \rbrace$ of ETC to an instance of 4RDP. However, in this case, we need to consider $r\ge 4$ so that the reduction will work. To this end, we need to construct a  comb convex bipartite graph $H_{r,s}$ in polynomial time as follows.

\smallskip
For each $x_i \in X$, we include vertices $x_i,y_i,x_i^\prime,y_i^\prime$, and create $C_i$ for each $C_i \in C$. Also, we form a star graph with vertex set $\{b,b_1,b_2,\ldots,b_{26r+6}\}$ with $b$ as the center vertex; a second star graph with vertex set $\{b,C_1,C_2,\ldots,C_{s}\}$ with $b$ as the center vertex; and a third star graph  with vertex set $\{b^\prime,C_1,C_2,\ldots,C_{s}\}$ with $b^\prime$ as the center vertex. In addition, we add edges between $x_i$ and $C_j$ if $x_i \in C_j$; and add edges of the form $x_iy_i$ and $x_i^\prime y_i^\prime$. Finally, to obtain the graph $H_{r,s}$, we add all possible edges between $C_i$ and $x_j^\prime$. Figure \ref{fig:sg} shows a sketch of a graph constructed as detailed above.

\begin{figure}[ht]
    \centering
    \begin{subfigure}[b]{0.3\textwidth}
        \centering
        \includegraphics[width=4cm,height=6cm]{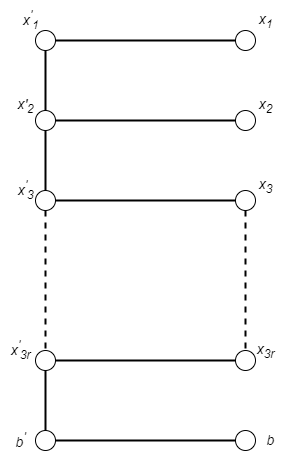}
        \caption{Comb graph}
        \label{fig:scobp}
    \end{subfigure}%
    \hfill
    \begin{subfigure}[b]{0.7\textwidth}
        \centering
        \includegraphics[width=7cm,height=8cm]{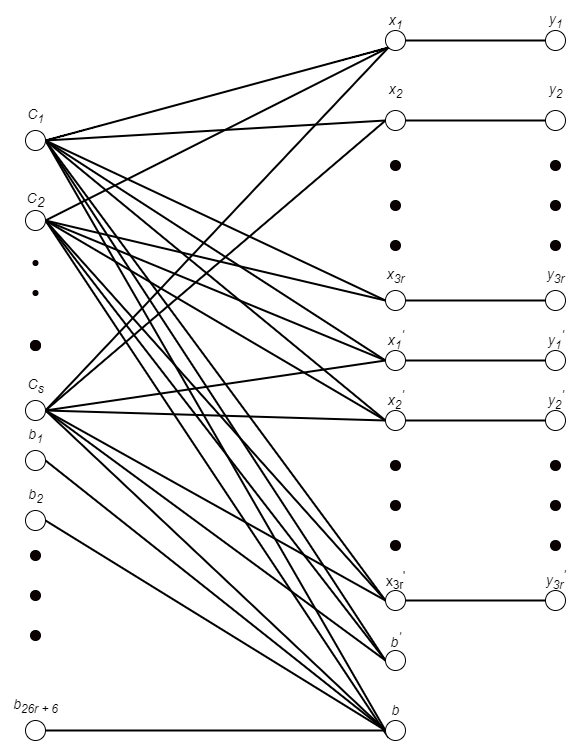}
        \caption{Comb convex bipartite graph construction from ETC.}
        \label{fig:sg}
    \end{subfigure}
    \caption{Comparison of Comb graph and convex bipartite construction.}
\end{figure}

Notice that $H_{r,s}$ is bipartite and let $P = \lbrace b, b^\prime x_1, x_{2},\ldots, x_{3r}, x_1^\prime, x_{2}^\prime, \ldots, x_{3r}^\prime \rbrace$ and $Q$ = $V  \setminus P$ be the bipartition sets of $H_{r,s}$. Consider the comb having vertex set $P$ as shown in Figure \ref{fig:scobp}. Hence, notice that the neighbors of each element in $Q$ induce a connected subgraph of such comb. Thus, the constructed graph $H_{r,s}$ is comb convex bipartite. We next show that an instance of the ETC problem has positive solution if and only if $H_{r,s}$ has a 4RDF with weight at most $26r+5$.

\smallskip
Assume $C^\prime$ with cardinality $r$ is a positive solution for our instance of the ETC problem. We define a function $f : V(H_{r,s}) \rightarrow \{0, 1, 2,3,4,5\}$ with weight $26r+5$ as follows.
\begin{equation} \label{combeq}
f(u)=
\begin{cases}
5, & \text{if}\ u=b, \\
4, & \text{if}\ u\in \{y_1,y_2,\ldots,y_{3r}\} \cup \{y_1^\prime,y_2^\prime,\ldots,y_{3r}^\prime\},\\
2, & \text{if}\ u \in C^\prime,\\
0, & \text{otherwise}.
\end{cases}
\end{equation}	
We claim that such $f$ is a 4RDF. To see this, we consider the following cases.
\begin{itemize}
    \item $f(N[b'])=\sum_{v\in N[b']} f(v) = 2r\ge r+4 = |\{y:y \in N(b') \; \text{and} \; f(y) \neq 0\}|+4$, since $r\ge 4$ as assumed.
    \item For any $i\in \{1,\dots,26r+6\}$, it holds $f(N[b_i]) = \sum_{v\in N[b_i]} f(v) = 5 = |\{y:y \in N(b_i) \; \text{and} \; f(y) \neq 0\}|+4$.
    \item For any $i\in \{1,\dots,3r\}$, it holds that $f(N[x_i]) = \sum_{v\in N[x_i]} f(v) = 6 = |\{y:y \in N(x_i) \; \text{and} \; f(y) \neq 0\}|+4$ as well as $f(N[x'_i]) = \sum_{v\in N[x'_i]} f(v) = 2r+4\ge r+5 = |\{y:y \in N(x'_i) \; \text{and} \; f(y) \neq 0\}|+4$, since $r\ge 4$.
    \item If $C_j\in C'$, then $f(N[C_j]) = \sum_{v\in N[C_j]} f(v) = 7 > 5 = |\{y:y \in N(C_j) \; \text{and} \; f(y) \neq 0\}|+4$.
    \item If $C_j\notin C'$, then $f(N[C_j]) = \sum_{v\in N[C_j]} f(v) = 5 = |\{y:y \in N(C_j) \; \text{and} \; f(y) \neq 0\}|+4$.
\end{itemize}
Therefore, $f$ is a 4RDF as claimed, and so,  $\gamma_{4R}(H_{r,s}) \leq f(V(H_{r,s}))=26r+5$.

\medskip
Conversely, if we assume that $H_{r,s}$ has a 4RDF $g$ with weight at most $26r+5$, then we can proceed with a proof that somehow runs along the lines of the second part of the proof of Theorem \ref{theorem1} to obtain the desired conclusion. That is, by using similar arguments to the mentioned ones, we first show that all the vertices $b_i$ with $1\le i\le 26r+6$ satisfy that $g(b_i)=0$ and that $g(b)=5$. We next prove that all the vertices $x_i,x'_i$ with $1\le i\le 3r$ satisfy that $g(x_i)=g(x'_i)=0$. That is stated in the next claim whose proof appears in the Appendix section, since it is similar to the one of Claim \ref{claim-1}.

\begin{claim-proof}
\label{claim-2}
For each $x_i \in X$ it holds $g(x_i) = 0$ and $g(x'_i)=0$.
\end{claim-proof}

We next proceed with proving that all the vertices $y_i$ with $1\le i\le 3r$ hold that $g(y_i)=4$, which is the next claim. The proof of this is relatively similar as the related one from Theorem \ref{theorem1}. The formal proof appears also in the Appendix section.

\begin{claim-proof}
\label{claim-3}
For each $x_i \in X$ it holds $g(y_i) = 4$.
\end{claim-proof}

Finally, and again similarly to the conclusion of the proof of Theorem \ref{theorem1}, by the previous arguments, each $x_i$ must have a neighbor $C_j$ such that $x_i\in C_j$ with $g(C_j)=2$. Since  $g$ has weight at most $26r+5$, $g(b)=5$, and $g(y_i)=4$ for every $1 \leq i \leq 3r$, the number of vertices of type $C_j$ with $g(C_j)=2$ is equal to $r$, since otherwise there will be a vertex $x_i$ that will have no neighbor $C_j$ such that $x_i\in C_j$ with $g(C_j)=2$. This again means that each vertex $x_i$ has exactly one such neighbor with label $2$ under $g$. Therefore, the set of these vertices $C_j$, such that $g(C_j)=2$, represents a positive solution for the ETC instance, and the proof is completed.
\end{proof}

\noindent We may remark that from Theorems \ref{theorem1} and \ref{theorem2} the below result follows.

\begin{corollary}
The 4RDP is NP-complete even when restricted to tree convex bipartite graphs.	
\end{corollary}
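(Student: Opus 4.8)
The plan is to obtain this as an immediate consequence of either Theorem \ref{theorem1} or Theorem \ref{theorem2} via a containment of graph classes, without constructing any new reduction. The starting observation is that both a star graph $K_{1,n}$ and a comb graph are trees. Recalling the definition given in the Introduction, a bipartite graph with bipartition $V_1,V_2$ is \emph{tree convex bipartite} if there is a tree $T$ on $V_2$ such that for every $v\in V_1$ the neighborhood $N(v)$ induces a connected (i.e.\ subtree) structure of $T$; the star (resp.\ comb) convex case is exactly the specialization where $T$ is required to be a star (resp.\ comb). Since a star and a comb are particular trees, every star convex bipartite graph is a tree convex bipartite graph, and likewise every comb convex bipartite graph is a tree convex bipartite graph. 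In other words, the families of star convex and comb convex bipartite graphs are subclasses of the family of tree convex bipartite graphs.

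Next I would argue the hardness transfer. The reduction from ETC in Theorem \ref{theorem1} produces the graph $G_{r,s}$, which was explicitly verified there to be star convex bipartite; by the containment just noted, $G_{r,s}$ is in particular tree convex bipartite. Hence the very same polynomial-time reduction $X,C \mapsto (G_{r,s},\,14r+5)$ is already a valid reduction from ETC to the 4RDP restricted to tree convex bipartite graphs, and so this restricted problem is NP-hard. (Equivalently, one could invoke Theorem \ref{theorem2} and the graph $H_{r,s}$.) No new gadget is needed: enlarging the admissible class of host graphs can only make the decision problem harder, so NP-hardness on the subclass forces NP-hardness on the superclass.

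For membership in NP I would simply reuse the observation made at the opening of Section \ref{complexity}: given a function $f$ and an integer $s$, one can check in polynomial time whether $f$ is a 4RDF of $G$ of weight at most $s$, and this verification is oblivious to the structure of $G$. Consequently the 4RDP lies in NP for tree convex bipartite graphs as well. Combining this with the NP-hardness from the previous paragraph yields NP-completeness, completing the proof.

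I do not anticipate a genuine obstacle here; the only point requiring care is the class-containment step, namely confirming from the definitions that ``star/comb convex bipartite'' are restrictions of ``tree convex bipartite.'' Once that is in place, the result is a one-line inheritance of NP-completeness from Theorem \ref{theorem1} (or Theorem \ref{theorem2}), with the NP-membership already guaranteed in full generality.
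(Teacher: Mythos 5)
Your argument is correct and is exactly the paper's route: the corollary is stated there as an immediate consequence of Theorems \ref{theorem1} and \ref{theorem2}, precisely because a star and a comb are trees, so star/comb convex bipartite graphs are tree convex bipartite and NP-hardness transfers to the superclass, with NP-membership already established generically at the start of Section \ref{complexity}. No differences worth noting.
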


We next consider other classic graph class, in which we also prove that the 4RDP is NP-complete.

\begin{theorem}
\label{theoremsplit}
The 4RDP is NP-complete	even when restricted to split graphs.
\end{theorem}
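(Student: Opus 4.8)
The plan is to reduce from EXACT-3-COVER (ETC), exactly as in Theorems \ref{theorem1} and \ref{theorem2}, but now producing a \emph{split} graph. Given an instance $X=\{x_1,\dots,x_{3r}\}$ and $C=\{C_1,\dots,C_s\}$ (where, without loss of generality, every element lies in some set, for otherwise the instance is a trivial ``no''), I would build $G$ whose clique is $K=\{b,C_1,\dots,C_s\}$ and whose independent set is $I=\{x_1,\dots,x_{3r}\}\cup\{b_1,\dots,b_{5r+6}\}$. I make $K$ complete, join $b$ to every $b_i$, and join $x_i$ to $C_j$ whenever $x_i\in C_j$; no other edges are added. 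This is clearly a split graph, it is connected (the clique is internally joined through $b$, and every $x_i$ hangs off some $C_j$), and it is built in polynomial time. The target will be $\gamma_{4R}(G)\le 5r+5$: the summand $5$ plays the same anchoring role as in the previous proofs, while each chosen set now costs $5$ rather than $2$, because the elements $x_i$ carry no pendants in a split graph.

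For the forward implication, given a solution $C'\subseteq C$ with $|C'|=r$, I would set $f(b)=5$, $f(C_j)=5$ for $C_j\in C'$, and $f\equiv 0$ elsewhere, of weight $5+5r$. Checking that $f$ is a 4RDF is routine: each $b_i$ sees only $b$, so $f(N[b_i])=5=4+1$; each $x_i$ lies in exactly one chosen set, so $f(N[x_i])=5=4+1$; each unchosen $C_k$ has closed-neighbourhood weight $5+5r$ against a demand of $4+(r+1)$, which holds for $r\ge 1$; and the chosen sets and $b$ carry label $5$ and impose no condition.

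The substance is the converse. Taking a minimum-weight 4RDF $g$ with $g(V(G))\le 5r+5$, I would first reproduce the anchoring argument of Theorem \ref{theorem1}: since there are $5r+6$ pendants $b_i$ but the budget is only $5r+5$, at least one $b_j$ has $g(b_j)=0$, which forces $g(b)=5$, and minimality then zeroes out every remaining $b_i$. Hence the weight carried by $\{x_i\}\cup\{C_j\}$ is at most $5r$. The key step is a charging argument analogous to Claim \ref{claim-1}. Writing $w_j=(g(C_j)-1)^+$, any element $x_i$ with $g(x_i)=0$ forces $\sum_{j:\,x_i\in C_j} w_j\ge 4$; summing over all $3r$ elements and using that each set meets at most three of them gives $3\sum_j w_j\ge 12r$, hence $\sum_j w_j\ge 4r$. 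Since $\sum_j g(C_j)=\sum_j w_j+|\{j:g(C_j)>0\}|$ and each $w_j\le 4$ (so there are at least $\sum_j w_j/4\ge r$ positive sets), the set weight is at least $4r+r=5r$. As the whole element-plus-set budget is exactly $5r$, this forces every element to have label $0$, exactly $r$ sets to be labelled $5$, and each element to lie in exactly one of them, i.e. an exact cover; labelled elements only add to the left-hand demands and hence can only raise the bound, so they cannot occur at optimum.

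I expect the main obstacle to be precisely this equality analysis: one must rule out the cheaper-looking mixed configurations (elements labelled $4$ or $5$, or several sets of label $2$ or $3$ jointly covering a single element) and show that none of them beats the ``one set of label $5$ per triple'' pattern. This is the analogue of the three-case computation on $a \bmod 3$ in Claim \ref{claim-1}; here it is cleanest to route everything through the surplus variables $w_j=(g(C_j)-1)^+$, for which each positive set contributes at most $4$ against the per-element demand of $4$ and exactly $w_j+1$ to the total cost, pinning the optimum to $r$ sets of weight $5$ whose triples partition $X$.
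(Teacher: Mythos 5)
Your reduction is correct, but it is heavier than the paper's, and your converse runs on a different engine. The paper's construction is exactly the ``core'' of yours: the vertex set is just $X\cup C$, with $C$ turned into a clique and $x_iC_j\in E$ iff $x_i\in C_j$, and the target weight is $5r$; no anchor vertex $b$ and no pendants $b_1,\dots,b_{5r+6}$ are used, because the elements $x_i$ already force all the weight onto the clique. So the $b$-gadget you import from Theorems \ref{theorem1} and \ref{theorem2} is harmless but superfluous here, costing you only a shifted target of $5r+5$. Where you genuinely diverge is the converse: the paper argues that each zero-labelled element must see a set labelled $5$ and then counts $5\lceil(3r-l)/3\rceil+l+3\lceil l/3\rceil>5r$ when $l>0$ elements are positively labelled, whereas you work with the surplus variables $w_j=(g(C_j)-1)^+$, the inequality $3\sum_j w_j\ge 4\,|\{i: g(x_i)=0\}|$, and the count of positive sets being at least $\frac14\sum_j w_j$. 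Your route is the more robust one: it correctly absorbs configurations in which an element is served by several partially labelled sets (e.g.\ two neighbours labelled $3$ satisfy a zero element without any label-$5$ neighbour), a case the paper's ``must have a neighbour with $g(C_j)=5$'' step passes over. The one step you must not leave informal is the elimination of positively labelled elements: as written you derive $\sum_j w_j\ge 4r$ only after assuming all elements are zero, and then assert that positive labels ``can only raise the bound,'' which as stated is circular. To close it, run the same accounting with an element labelled $v\in\{1,2,3\}$ demanding $\sum_{j\ni x_i}w_j\ge 4-v$: its net contribution to the lower bound on the total weight is $v+\tfrac{5}{12}(4-v)=\tfrac{5}{3}+\tfrac{7v}{12}>\tfrac{5}{3}$, and labels $4,5$ contribute $4,5>\tfrac{5}{3}$ outright, so the value $\tfrac{5}{3}\cdot 3r=5r$ is attainable only when every element is labelled $0$; the tightness analysis then forces exactly $r$ sets of label $5$ whose triples partition $X$, as you claim.
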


\begin{proof}
In a similar manner as in the Theorems \ref{theorem1} and \ref{theorem2}, we next transform in polynomial time an instance $X=\lbrace x_1, x_2, \ldots, x_{3r} \rbrace$ and $C=\lbrace C_1, C_2,\ldots, C_s \rbrace$ of the ETC problem to an instance $G_{r,s}$ of 4RDP in a split graph.

\smallskip
For each $x_i \in X$, we include a vertex $x_i$ and  for each $C_i \in C$ we include vertex $C_i$. Then we add edges between every pair of $C_i,C_j\in C$. Finally, to obtain the graph $G_{r,s}$, we add edges of the form $x_iC_j$, if $x_i \in C_j$. Notice that $G_{r,s}$ is a split graph with a clique $\{C_1,C_2,\ldots,C_{s}\}$ and an independent set $V(G_{r,s}) \setminus \{C_1,C_2,\ldots,C_{s}\}$. We next show that a positive solution for the instance of ETC exists if and only if $G_{r,s}$ has a 4RDF with weight at most $5r$.

\smallskip
Assume $C^\prime$ with cardinality $r$ is a  solution for ETC. We define a mapping $f : V(G_{r,s}) \rightarrow \{0, 1, 2,3,4,5\}$ with weight $5r$ as follows.
\begin{equation} \label{eqsplit}
	f(u)=
	\begin{cases}
	5, & \text{if}\ u \in  C^\prime,\\
	0, & \text{otherwise}.
	\end{cases}
\end{equation}
We claim that such $f$ is a 4RDF. To see this, notice that $f(N[x_i])=\sum_{v\in N[x_i]} f(v) = 5 = |\{y:y \in N(x_i) \; \text{and} \; f(y) \neq 0\}|+4$ for every $x_i\in X$. Also, if $C_j\notin C'$, then $f(N[C_j])=\sum_{v\in N[C_j]} f(v) = 5r \ge r+4 = |\{y:y \in N(C_j) \; \text{and} \; f(y) \neq 0\}|+4$.
Thus, $f$ our claim follows and we have that $f(V(G_{r,s}))\le 5r$.

\medskip
Assume next that $g$ is a 4RDF of $G$ with weight at most $5r$.  For $i=0,1,2,3,4,5$, let $V_i =\{ v : g(v)=i \}$. We claim that $X \subset V_0$. To see this, suppose $l>0$ is the number of vertices in $X$ with a positive label under $g$. Notice that, if there is a vertex $x_i\in X$ with $g(x_i)=0$, then $x_i$ must have a neighbor $C_j\in C$ such that $g(C_j)=5$ since $x_i$ has no neighbors in $X$. Hence, it follows $g(V(G_{r,s})) \geq 5 \left\lceil \frac {3r-l}{3} \right\rceil +l+3 \left\lceil \frac{l}{3} \right\rceil > 5r$, which is a contradiction. Thus, $X \subset V_0$. Consequently, we deduce that $|\{C_i : g(C_i) =5\}| \geq r$ and  $|\{C_1,C_2,\ldots,C_{s}\} \cap V_5|=r$. Therefore $\{C_1,C_2,\ldots,C_{s}\} \cap V_5$ represents a positive solution to our instance of ETC.
\end{proof}	

We end this section with the following remark. If a graph $G$ is constructed from an instance $\langle X, C \rangle$ of ETC with $X=\lbrace x_1, x_2, \ldots, x_{3r} \rbrace$ and $C=\lbrace C_1, C_2,\ldots, C_s \rbrace$  such that $V(G)=\{x_1, x_2, \ldots, x_{3r}, C_1, C_2,\ldots, C_s\}$, $E(G)=\{x_iC_j : x_i \in C_j, 1\le i\le 3r, 1\le j\le s\}$ and the graph $G$ is planar, then $\langle X, C \rangle$ is called a \textit{Planar Exact Three Cover} (PETC) instance of $G$ and deciding if PETC has a solution is NP-complete  \cite{planarx3c}. Next we state the following theorem whose proof similar to the proof given in Theorem \ref{theoremsplit}.

\begin{theorem} \label{theorem3}
    The 4RDP is NP-complete	even when restricted to planar graphs.
\end{theorem}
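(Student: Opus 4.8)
The plan is to adapt the reduction from the split graph case (Theorem~\ref{theoremsplit}) almost verbatim, but starting from the \textsc{PETC} problem rather than the general \textsc{ETC}. Given a \textsc{PETC} instance $\langle X, C\rangle$ with $X=\{x_1,\dots,x_{3r}\}$ and $C=\{C_1,\dots,C_s\}$, recall that by definition the associated \emph{incidence graph} $G$ with vertex set $\{x_1,\dots,x_{3r},C_1,\dots,C_s\}$ and edges $x_iC_j$ whenever $x_i\in C_j$ is already planar. First I would simply take this incidence graph $G$ itself as the instance of the 4RDP; no auxiliary vertices of type $b$, $b_i$, or clique edges are needed, since planarity must be preserved and adding the clique on the $C_j$'s (as in the split construction) would generally destroy it. The construction is trivially polynomial-time, and since membership of the 4RDP in NP has already been established, it remains only to prove the equivalence ``$\langle X,C\rangle$ has an exact cover $\iff \gamma_{4R}(G)\le 5r$.''

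For the forward direction, I would reuse the weighting from \eqref{eqsplit}: given an exact cover $C'\subseteq C$ with $|C'|=r$, set $f(C_j)=5$ for $C_j\in C'$ and $f(u)=0$ otherwise. The verification that $f$ is a 4RDF of weight $5r$ is essentially identical to the split case for the $x_i$ vertices: each $x_i$ lies in exactly one member of $C'$, so $f(N[x_i])=5=|\{y\in N(x_i):f(y)\ne0\}|+4$. The only point requiring a brief separate check, which did not arise in the split graph because there the $C_j$'s formed a clique, is the condition at the vertices $C_j\notin C'$: here $C_j$ has $f(C_j)=0$, so one must confirm $f(N[C_j])\ge |\{y\in N(C_j):f(y)\ne0\}|+4$ holds in the planar (edgeless-among-$C_j$'s) setting. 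Since every $x_i\in C_j$ is covered by some member of $C'$ and contributes $0$, while the positive weight seen at $C_j$ comes only through its three neighbours $x_i$, I expect this is the step that needs the most care, and it may force a small adjustment in the argument (for instance, noting that $C_j$ itself may require a nonzero label unless handled, or verifying the covering neighbours suffice); if an issue appears, the cleanest fix is to observe each $x_i$ with $f(x_i)=0$ is already dominated with the required surplus, so no $C_j$ with label $0$ fails its own constraint.

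For the converse, I would follow the split-graph argument with the same counting. Let $g$ be a 4RDF of weight at most $5r$ and define $V_i=\{v:g(v)=i\}$. I would first show $X\subseteq V_0$: if some $x_i$ had a positive label, then counting the weight forced on the remaining uncovered $x$-vertices (each needing a neighbour of label $5$, since the $x_i$ have no neighbours among themselves) yields total weight exceeding $5r$, a contradiction via the same inequality $g(V(G))\ge 5\lceil(3r-l)/3\rceil+l+3\lceil l/3\rceil>5r$ used in Theorem~\ref{theoremsplit}. With $X\subseteq V_0$, each $x_i$ must be dominated by a $C_j$ carrying enough weight to satisfy $g(N[x_i])\ge 4$; a short case analysis shows at least $r$ of the $C_j$'s must receive label $5$ (or sufficient weight), and the weight bound $5r$ forces exactly $r$ such sets which must therefore cover all $3r$ elements disjointly, i.e.\ form an exact cover.

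The main obstacle is purely structural rather than computational: in the split-graph proof the clique among the $C_j$'s was used implicitly to guarantee domination of the $C_j$ vertices and to streamline the weight accounting, and that clique \emph{cannot} be added here without breaking planarity. Hence the genuine work is to re-examine every place where the split proof relied on $C_j$'s mutual adjacency and confirm the estimates still go through in the planar incidence graph, where each $C_j$ sees only its three $x$-neighbours. I expect the counting inequalities to survive unchanged, because they were driven by the $x$-side constraints, but the 4RDF-verification at unlabelled $C_j$ vertices in the forward direction is the step I would write out most carefully.
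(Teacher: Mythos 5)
There is a genuine gap in the forward direction, precisely at the step you flagged as needing the most care. In the bare planar incidence graph the labelling of \eqref{eqsplit} is \emph{not} a 4RDF: a vertex $C_j\notin C'$ has $f(C_j)=0$, its only neighbours are its three elements $x_i$, all of which also carry label $0$, so $f(N[C_j])=0<4=|\{y\in N(C_j):f(y)\neq 0\}|+4$. Your proposed repair --- that each $x_i$ is ``already dominated with the required surplus, so no $C_j$ with label $0$ fails its own constraint'' --- misreads the definition: the condition at $C_j$ constrains the total weight on $N[C_j]$ itself and is not discharged by the neighbours of $C_j$ being individually satisfied. This is exactly the job the clique on $\{C_1,\dots,C_s\}$ performed in Theorem~\ref{theoremsplit} (it let every uncovered $C_j$ see the label-$5$ vertices of $C'$), and once the clique is removed nothing replaces it. The consequence is not cosmetic: whenever $s>r$, any 4RDF must place additional positive weight in the closed neighbourhood of every $C_j\notin C'$, so $\gamma_{4R}(G)>5r$ even when an exact cover exists, and the claimed equivalence at threshold $5r$ fails in the forward direction (which also renders the converse counting at that threshold meaningless).

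To make the reduction work you must augment the planar incidence graph with planarity-preserving gadgets (pendant vertices or pendant paths are the natural choice, since attaching them never destroys planarity) whose role is to neutralise the constraint at the uncovered set-vertices and to force the labels of the element-vertices, and then recompute the target weight accordingly --- compare the vertices $y_i$ in Theorem~\ref{theorem1} and $b_i$ in Theorem~\ref{inapproxmr3dp}, which exist for exactly this purpose. The paper itself only asserts that the proof is ``similar to'' Theorem~\ref{theoremsplit} after replacing ETC by PETC, so your choice of source problem matches the intended route; but as written your construction and your target value do not yield a correct equivalence, and both directions of the argument would have to be redone against a corrected construction and threshold.
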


\section{Bounds and exact values}

\noindent Let $G$ be a split graph whose vertex set has a split partition $(C, I)$, where $I$ is an independent set and $C$ is a clique. The graph $G$ is a \textit{threshold graph} if and only if (i) there is an ordering $(y_1, y_2, \ldots, y_q)$ of the vertices of $I$ such that $N_G(y_1) \supseteq N_G(y_2) \supseteq \cdots \supseteq N_G(y_q)$, and (ii) there is an ordering $(x_1, x_2, \ldots, x_p)$ of vertices of $C$ such that $N_G[x_1] \subseteq N_G[x_2] \subseteq \cdots \subseteq N_G[x_p]$ \cite{threshold}. 	

\begin{theorem}\label{th:threshold}
For any connected threshold graph $G$ with $|V(G)| \geq 2$, $\gamma_{[4R]}(G)=5$.
\end{theorem}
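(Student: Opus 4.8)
The plan is to prove the two inequalities $\gamma_{[4R]}(G)\le 5$ and $\gamma_{[4R]}(G)\ge 5$ separately. The upper bound is where the threshold structure genuinely enters, whereas the lower bound will in fact hold for every connected graph on at least two vertices.

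The first key step is to observe that a connected threshold graph possesses a universal vertex, and I would read this off the ordering. Condition (ii) of the definition gives $N_G[x_i]\subseteq N_G[x_p]$ for every clique vertex $x_i$, so $x_p$ is adjacent to all other vertices of $C$; and for an independent-set vertex $y_j$, connectivity forces a neighbour $x_k\in C$, whence $y_j\in N_G(x_k)\subseteq N_G[x_k]\subseteq N_G[x_p]$ and therefore $y_j\in N_G(x_p)$. Thus $x_p$ is adjacent to every other vertex. With this in hand I would define $f$ by $f(x_p)=5$ and $f(v)=0$ for all $v\ne x_p$, and check that $f$ is a 4RDF: the vertex $x_p$ carries label $5$ and is exempt from the defining condition, while each remaining vertex $v$ has label $0$, a single positive neighbour $x_p$, and $f(N[v])=5$, so the requirement $f(N[v])\ge |\{y\in N(v):f(y)\ne 0\}|+4=5$ holds with equality. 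This gives $\gamma_{[4R]}(G)\le 5$.

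For the lower bound I would assume, for contradiction, a 4RDF $h$ of weight at most $4$ and analyse the set $S=\{v:h(v)\ne 0\}$. If $S=\emptyset$, then every vertex violates the condition, since its closed neighbourhood has weight $0$. If $S=V(G)$, then any vertex $v$ with $h(v)\le 3$ would need $h(N[v])\ge \deg(v)+4\ge 5$ while $h(N[v])\le 4$, so all vertices must carry label at least $4$, forcing total weight at least $8$. In the remaining case $S$ and $V(G)\setminus S$ are both nonempty, and connectivity yields an edge between them; its zero-labelled endpoint $w$ then has a positive neighbour, so $h(N[w])\ge 1+4=5$, again contradicting $h(N[w])\le 4$. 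Every case is impossible, hence $\gamma_{[4R]}(G)\ge 5$, and combining the two bounds completes the argument.

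The threshold hypothesis is used only to guarantee the universal vertex $x_p$, which is exactly what makes the exact value so small; the part requiring the most care is the lower-bound case distinction, where one must invoke connectivity to produce a zero-labelled vertex adjacent to a positively-labelled one whenever the labelling is neither identically zero nor everywhere positive. I expect this case analysis, rather than any single computation, to be the only genuinely delicate point.
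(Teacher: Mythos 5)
Your proposal is correct and follows essentially the same route as the paper: the paper likewise assigns label $5$ to the last clique vertex $x_p$ in the ordering (noting it is adjacent to every other vertex) and simply declares the lower bound $\gamma_{[4R]}(G)\ge 5$ to be clear from the definition. Your additional justifications --- deriving universality of $x_p$ from condition (ii) plus connectivity, and the three-case argument for the lower bound --- are correct fillings-in of details the paper leaves implicit, not a different method.
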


\begin{proof}
Let $(C, I)$ be a split partition of $G$ and let $(x_1, x_2, \ldots, x_p)$ be an ordering of the vertices of the clique $C$ such that $N_G[x_1] \subseteq N_G[x_2] \subseteq \cdots \subseteq N_G[x_p]$. From the definition of 4RDF, clearly $\gamma_{[4R]}(G) \geq 5$. Next, we define a function $f : V(G) \rightarrow \{0, 1, 2,3,4,5\}$ with weight $5$ as follows.
	\begin{equation}
	f(u)=
	\begin{cases}
	5, & \text{if}\ u = x_p,\\
	0, & \text{otherwise}.
	\end{cases}
	\end{equation}
It can be readily seen that such $f$ is a 4RDF since $x_p$ is adjacent to every vertex of $G$. Thus, $\gamma_{[4R]}(G) \leq 5$ and the result follows.
\end{proof}

In order to present some other exact values for the quadruple Roman domination number of some graphs, we need to introduce the following terminology. A dominating set $D=\{v_1,\dots,v_r\}\subset V(G)$ is an \textit{efficient dominating set} of the graph $G$, if the collection of sets $\{N[v_1],\dots,N[v_r]\}$ forms a partition of $V(G)$. Note that not every graph $G$ has an efficient dominating set. For instance, it can be readily seen that a cycle $C_n$ has an efficient dominating set if and only if $n$ is a multiple of $3$. In this sense, it is said that a graph $G$ is an \textit{efficient domination graph} whenever $G$ has an efficient dominating set. Efficient dominating sets in graphs were first introduced in \cite{Biggs} under the name of perfect codes, and have been further on very well studied. We may recall that, if $G$ is an efficient domination graph with efficient dominating set $D$, then $\gamma(G)=|D|$.

\begin{theorem}
\label{th:efficient}
For any efficient domination graph $G$, $$4\gamma(G)+1\le \gamma_{[4R]}(G) \leq 5\gamma(G).$$
%Moreover, if $G$ has minimum degree at least $2$, then $\gamma_{[4R]}(G)= 5\gamma(G)$.
\end{theorem}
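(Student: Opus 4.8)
The goal is to establish both bounds on $\gamma_{[4R]}(G)$ for an efficient domination graph $G$ with efficient dominating set $D=\{v_1,\dots,v_r\}$, where $r=\gamma(G)$.

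\medskip
\textbf{Upper bound.} The plan for $\gamma_{[4R]}(G)\le 5\gamma(G)$ is the most direct part. I would simply define $f(v_i)=5$ for each $v_i\in D$ and $f(u)=0$ elsewhere, giving weight $5r=5\gamma(G)$. To verify this is a 4RDF, take any vertex $x$ with $f(x)\in\{0,1,2,3\}$; then $f(x)=0$, and since $D$ is dominating, $x$ lies in some $N[v_i]$, so $x$ has a neighbor (or is itself, but $f(x)=0$ rules that out, so a neighbor) $v_i$ with $f(v_i)=5$. Because $\{N[v_1],\dots,N[v_r]\}$ partitions $V(G)$, the vertex $x$ has exactly one neighbor with nonzero label, namely that single $v_i$. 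Hence $f(N[x])=5=1+4=|\{y\in N(x):f(y)\neq 0\}|+4$, so the defining inequality holds with equality. This confirms the upper bound.

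\medskip
\textbf{Lower bound.} For $\gamma_{[4R]}(G)\ge 4\gamma(G)+1$, let $g$ be any 4RDF of minimum weight. The key structural fact I would exploit is the partition: the closed neighborhoods $N[v_1],\dots,N[v_r]$ are pairwise disjoint and cover $V(G)$. The idea is to charge weight to each part and show each part $N[v_i]$ contributes at least $4$ to the total weight, with at least one part contributing strictly more (giving the extra $+1$). Concretely, for a fixed $v_i$, consider the center $v_i$ itself: if $g(v_i)\in\{0,1,2,3\}$ then the 4RDF condition at $v_i$ forces $g(N[v_i])\ge |\{y\in N(v_i):g(y)\neq 0\}|+4\ge 4$; if instead $g(v_i)\in\{4,5\}$ then trivially $g(N[v_i])\ge g(v_i)\ge 4$. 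So in every case the weight restricted to the part $N[v_i]$ is at least $4$, and summing over the $r$ disjoint parts yields $g(V(G))\ge 4r=4\gamma(G)$.

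\medskip
The \emph{main obstacle} is upgrading $4\gamma(G)$ to $4\gamma(G)+1$, i.e. ruling out that every part contributes exactly $4$ simultaneously. I expect to argue that a part can contribute exactly $4$ only in a very rigid way, and that this rigid configuration cannot hold across all parts at once. If $g(N[v_i])=4$, then at the center $v_i$ (which then has $g(v_i)\le 3$) the inequality $g(N[v_i])\ge |\{y\in N(v_i):g(y)\neq 0\}|+4$ forces $|\{y\in N(v_i):g(y)\neq 0\}|=0$, meaning every neighbor of $v_i$ has label $0$ and thus $g(v_i)=4$ carries the whole weight of the part. But then each such neighbor $u\in N(v_i)$, having $g(u)=0$, must itself satisfy the 4RDF condition using weight it can only receive from within $N[v_i]$ (its only nonzero-labelled neighbor candidates)—here I would check $u$'s requirement $g(N[u])\ge |\{y\in N(u):g(y)\neq 0\}|+4$ and show $u$ needs a neighbor of sufficiently large label, forcing additional weight in the part and contradicting $g(N[v_i])=4$ whenever $v_i$ has at least one neighbor. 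The only escape is the isolated-in-its-part case, but connectedness of $G$ together with $|V(G)|\ge 2$ prevents every part from being a singleton. Carefully handling this boundary—pinning down exactly which degenerate parts admit weight $4$ and showing they cannot tile the whole connected graph—is where the delicate bookkeeping lies; once that single strict part is secured, I conclude $\gamma_{[4R]}(G)\ge 4\gamma(G)+1$, completing the proof.
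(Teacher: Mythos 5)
Your upper bound and the first half of your lower bound coincide with the paper's proof: the function that places $5$ on $D$ and $0$ elsewhere, the use of the partition $\{N[v_1],\dots,N[v_r]\}$ to charge weight, the observation that each part carries weight at least $4$, and the rigidity statement that a part of weight exactly $4$ must have $g(v_i)=4$ with label $0$ on all of $N(v_i)$. The gap is in how you convert that rigidity into the extra $+1$. You claim that a vertex $u\in N(v_i)$ with $g(u)=0$ "can only receive weight from within $N[v_i]$", and hence that $g(N[v_i])=4$ is contradictory whenever $v_i$ is not isolated inside its part. This per-part claim is false: efficient domination only guarantees that $u$ has a \emph{unique} neighbor in $D$; it may well have neighbors in $N[v_j]\setminus\{v_j\}$ for $j\neq i$ carrying positive labels. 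A concrete counterexample is $P_4=a\,b\,c\,d$ with efficient dominating set $\{a,d\}$: the assignment $g(a)=4$, $g(b)=g(d)=0$, $g(c)=5$ is a valid 4RDF of weight $9=4\gamma(P_4)+1$, yet the part $N[a]=\{a,b\}$ has weight exactly $4$ even though $a$ has the neighbor $b$ (which is rescued by $c$ from the other part). So your conclusion that every non-singleton part contributes at least $5$ does not hold, and the chain "only singleton parts may have weight $4$, and not all parts are singletons" collapses.

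The repair — and this is exactly what the paper does — is to apply the rigidity \emph{globally} rather than part by part: if every part had weight exactly $4$, then every vertex of $V(G)\setminus D$ would have label $0$ and every $v_i$ label $4$; any vertex $u\notin D$ (which exists since $G$ is connected with at least two vertices) would then have exactly one nonzero neighbor, namely its unique $D$-neighbor labelled $4$, so $g(N[u])=4<1+4$, contradicting the 4RDF condition at $u$. Hence at least one part has weight at least $5$, giving $g(V(G))\ge 4(\gamma(G)-1)+5=4\gamma(G)+1$. You do state this global goal at the start of your final paragraph, but the argument you then sketch localizes the contradiction to a single part, which is where it fails.
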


\begin{proof}
Let $D=\{v_1,\dots,v_r\}\subset V(G)$ be an efficient dominating set of $G$. Let $f : V(G) \rightarrow \{0, 1, 2,3,4,5\}$ be a function on $V(G)$ such that  $f(u)=5$ if $u\in D$, and $f(u)=0$ otherwise. Since any vertex not in $D$ has exactly one neighbor in $D$, we observe that $f(N[u])=\sum_{v\in N[u]} f(v) = 5 = |\{y:y \in N(u) \; \text{and} \; f(y) \neq 0\}|+4$ for every $u\notin D$. Thus, $f$ is a 4RDF whose weight is $5\gamma(G)$, and the upper bound follows.

On the other hand, given the efficient dominating set $D=\{v_1,\dots,v_r\}$, we consider the partition $\{N[v_1],\dots,N[v_r]\}$ of $V(G)$. Let $g$ be a 4RDF for $G$ with weight $\gamma_{[4R]}(G)$. If there is a vertex $v_i\in D$ such that $g(v_i)\le 3$, then there must be a vertex $v'_i\in N[v_i]$ such that $g(v'_i)\ge 1$. This in addition leads to conclude that $g(N[v_i])\ge 5$ in such situation. Notice that in any other case, it must happen $g(N[v_j])\ge 4$.

Now, if $g(N[v_j])=4$ for every $1\le j\le 4$, then it must occur that $g(v_j)=4$ for every $1\le j\le 4$, and that $g(v)=0$ for each $v\in \overline{D}=V(G)\setminus D$. However, this means that no vertex $v\in \overline{D}$ satisfies the condition for $g$ to be a 4RDF, which is not possible. Thus, there must be at least one vertex $v_j\in D$ such that $g(N[v_j])\ge 5$. Therefore,
$$\gamma_{[4R]}(G)=g(V(G))=\sum_{v_i\in D}g(N[v_i])\ge 4|D|+1=4\gamma(G)+1,$$
which is the desired lower bound.
\end{proof}

The equality in the two bounds of Theorem \ref{th:efficient} happens in several well known non trivial graph classes. Some examples are next given.

Let $\mathcal{F}$ be a family of graphs $G_t$ defined as follows. We begin with $t\ge 2$ copies $H_1,\dots,H_t$ of any graph $H$, one graph $G'$ having a universal vertex $y$, $t$ vertices $x_1,\dots,x_{t}$, and an extra vertex $w$. Then we add edges between $x_i$ and all the vertices of the copy $H_i$ of $H$, where $1\le i\le t$. Next we add all edges between $w$ and all the vertices from all the copies of $H$, as well as all, from all the vertices of $G'$.

We readily observe that the set $D=\{y,x_1,\dots,x_{t}\}$ is an efficient dominating set of the graph $G_t\in \mathcal{F}$ of cardinality $t+1$. Consider now the function $f: V(G_t) \rightarrow \{0, 1, 2,3,4,5\}$ defined as follows.
$$f(u)=\begin{cases}
	5, & \text{if}\ u=w,\\
        4, & \text{if}\ u\in D\setminus\{y\}, \\
	0, & \text{otherwise}.
	\end{cases}$$
Note that every vertex with label $0$ under $f$ is adjacent either to a vertex with label $5$ or to two vertices, one labelled $4$ and other labelled $5$. Thus, $f$ is a 4RDF, and we have that $\gamma_{[4R]}(G_t)\le 4t+5=4(t+1)+1=4\gamma(G_t)+1$. Hence, the equality $\gamma_{[4R]}(G_t)=4\gamma(G_t)+1$ follows from Theorem \ref{th:efficient}.

\smallskip
Consider next a family $\mathcal{F}'$ of graphs $G'_r$ defined as next described. We begin with a graph $G$. Then we add $r\ge 5$ pendant vertices to each vertex of $G$, and subdivide each edge of $G$ with two vertices. Now, the set of vertices $V(G)$ (in the original graph $G$) forms an efficient dominating set of $G'_r$ of cardinality $|V(G)|$, which is indeed the unique dominating set of $G'_r$.

We also observe that $\gamma_{[4R]}(G'_t)\ge 5|V(G)|$ since each vertex $v\in V(G)$ is adjacent to $r\ge 5$ leaves. That is, if at least one of such leaves has label $0$ under any 4RDF, then the vertex of $V(G)$ adjacent to such leaf needs to have label $5$. Thus, the equality $\gamma_{[4R]}(G'_t)=5\gamma(G'_t)$ is obtained from Theorem \ref{th:efficient}.

\section{Approximation results}

Based on the NP-completeness of the 4RDP, it is desirable to obtain some approximation results on such problem. That is, we obtain a lower and an upper bounds on the approximation ratio of the M4RDP. We also show that the M4RDP is in APX-complete for graphs with maximum degree 4. To this end, we use the following known result from \cite{inapproxalg} on determining the domination number $\gamma(G)$ of a graph $G$, which is known as the \texttt{DOMINATION} problem. Recall that a set of vertices $D$ is a \textit{dominating set} of a graph $G$ if every vertex not in $D$ has a neighbor in $D$. Also, the \textit{domination number} $\gamma(G)$ of $G$ is the cardinality of a smallest dominating set of $G$.

\begin{theorem}{\em\cite{inapproxalg}}\label{inapprox}
For a graph $G$ and any $\delta > 0$,  the \texttt{DOMINATION} problem cannot have a solution with approximation ratio $(1 - \delta) \ln |V|$  unless $P = NP$.
\end{theorem}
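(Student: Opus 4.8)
The plan is to obtain this statement as a corollary of the known hardness of approximating the minimum \textsc{Set Cover} problem, for which no polynomial-time algorithm attains an approximation ratio of $(1-\delta)\ln N$ on instances over a universe of size $N$ unless $P=NP$. Everything then rests on producing a gap-preserving reduction from \textsc{Set Cover} to \texttt{DOMINATION} that does not erode the logarithmic factor.

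First I would set up the reduction. Given a \textsc{Set Cover} instance with universe $U=\{u_1,\dots,u_N\}$ and a family $\mathcal{S}=\{S_1,\dots,S_m\}$ of subsets of $U$, I construct a graph $G$ with one vertex $s_i$ per set $S_i$ and one vertex $e_j$ per element $u_j$, joining $s_i$ to $e_j$ exactly when $u_j\in S_i$, and finally turning $\{s_1,\dots,s_m\}$ into a clique. Then $|V(G)|=N+m$, and $G$ is built in polynomial time.

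Next I would establish the exact identity $\gamma(G)=\tau$, where $\tau$ denotes the optimum of the \textsc{Set Cover} instance. The inequality $\gamma(G)\le\tau$ is immediate, since the set vertices of any cover dominate all element vertices while any one of them dominates the whole set-clique. For $\gamma(G)\ge\tau$, I would show that any dominating set can be converted, without increasing its size, into one consisting solely of set vertices: replacing an element vertex $e_j$ by any set vertex $s_i$ with $u_j\in S_i$ preserves domination of $e_j$, covers the entire clique, and dominates all of $S_i$. The sets indexed by such a purified dominating set then form a cover of cardinality at most $\gamma(G)$. Because this identity is exact, an $\alpha$-approximation for \texttt{DOMINATION} yields, after purification, an $\alpha$-approximation for \textsc{Set Cover}.

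The main obstacle is to guarantee that the factor $(1-\delta)\ln|V|$ really collapses to something below $\ln N$. Since $|V(G)|=N+m$, I would need $\ln(N+m)\le (1+o(1))\ln N$, i.e. a supply of hard \textsc{Set Cover} instances in which the number of sets satisfies $m\le N^{1+o(1)}$; the strong form of the \textsc{Set Cover} inapproximability result provides exactly such balanced instances. Under this control, setting $\alpha=(1-\delta)\ln|V|$ gives $\alpha\le(1-\delta')\ln N$ for some $\delta'>0$, so a polynomial-time $(1-\delta)\ln|V|$-approximation for \texttt{DOMINATION} would contradict the assumed hardness of \textsc{Set Cover}, forcing $P=NP$. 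This size-to-factor bookkeeping is the delicate point; the combinatorial reduction itself is routine.
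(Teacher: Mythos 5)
The paper does not prove this statement at all: it is imported verbatim from the cited reference \cite{inapproxalg} and used as a black box in the proof of Theorem \ref{inapproxmr3dp}. So there is no ``paper's proof'' to compare against; what can be assessed is whether your reconstruction is a valid derivation of the quoted result, and it essentially is. Your reduction is the canonical one (set vertices forming a clique, element vertices attached to the sets containing them), and the purification argument establishing the exact identity $\gamma(G)=\tau$ is correct: element vertices dominate only themselves and set vertices, so swapping $e_j$ for any $s_i$ with $u_j\in S_i$ never loses coverage, and any set vertex dominates the entire clique. You have also correctly isolated the one genuinely delicate point, namely that $|V(G)|=N+m$ and the factor $(1-\delta)\ln|V|$ only transfers to $(1-\delta')\ln N$ if $m\le N^{1+o(1)}$ on the hard instances; this is exactly where the standard proofs lean on the form of the hard \textsc{Set Cover} instances (or pad the universe with dummy elements to rebalance). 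Two small caveats you should make explicit if you write this up: (i) the argument needs every element to lie in at least one set (otherwise the purification step has no replacement available), which is the usual harmless normalization; and (ii) the $(1-\delta)\ln N$ hardness of \textsc{Set Cover} under the bare assumption $P\neq NP$ (rather than a quasi-polynomial-time assumption) is itself a nontrivial theorem of Dinur and Steurer, so your proof is only as unconditional as the version of \textsc{Set Cover} hardness you invoke. Neither caveat is a gap in the logic, only in the bookkeeping of hypotheses.
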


We next prove a theorem which gives a lower bound on approximation ratio of any algorithm for M4RDP.

\begin{theorem} \label{inapproxmr3dp}
Given a graph $G$, M4RDP cannot have algorithm  with $(1 - \epsilon) \ln |V|$ approximation ratio for any $\epsilon > 0$, unless $P = NP$.
\end{theorem}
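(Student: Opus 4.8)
The plan is to set up an approximation-preserving (gap-preserving) reduction from the \texttt{DOMINATION} problem, so that the inapproximability recorded in Theorem~\ref{inapprox} is transferred to the M4RDP. Concretely, from a graph $G$ (an instance of \texttt{DOMINATION}) I would build in polynomial time a graph $G'$ enjoying two crucial properties: (i) $\gamma_{[4R]}(G')=5\gamma(G)$, and (ii) $|V(G')|=(1+o(1))\,|V(G)|$, i.e. the blow-up is only by a constant factor. Property (i) makes the multiplicative constant $5$ cancel when approximation ratios are compared, while (ii) guarantees $\ln|V(G')|=(1+o(1))\ln|V(G)|$, so that the $\ln$-factor in the hardness threshold is preserved.

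For the construction I would use a split-graph gadget in the spirit of the proof of Theorem~\ref{theoremsplit}. View $G$ as a \texttt{SET COVER} instance whose universe is $V(G)$ and whose sets are the closed neighbourhoods $\{N[v]:v\in V(G)\}$. For each $v\in V(G)$ introduce a \emph{choice vertex} $c_v$, and make $\{c_v:v\in V(G)\}$ a clique. For each $u\in V(G)$ introduce a constant number $M\ge 2$ of \emph{element copies} $u^1,\dots,u^M$, forming an independent set, and join $u^i$ to $c_v$ exactly when $v\in N[u]$. The clique guarantees that a single choice vertex labelled $5$ protects all the choice vertices at once, exactly as in Theorem~\ref{theoremsplit}; the duplication of each element into $M$ copies is the device that rules out the self-defence option (labelling an element copy $\ge 4$), since covering all $M$ copies of $u$ by one neighbouring $c_v$ labelled $5$ costs $5$, whereas defending them individually costs at least $4M>5$.

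For the forward direction I would take a minimum dominating set $D$ of $G$, label $c_v=5$ for $v\in D$ and everything else $0$, and verify, as in the upper bound of Theorem~\ref{th:efficient}, that this is a 4RDF of weight $5|D|=5\gamma(G)$ (each copy of $u$ sees some $c_v$ with $v\in D\cap N[u]$ labelled $5$, and each unchosen choice vertex sees the $|D|$ labelled choice vertices inside the clique), so $\gamma_{[4R]}(G')\le 5\gamma(G)$. For the backward direction I would turn any 4RDF $h$ of $G'$, in polynomial time, into a dominating set of $G$ of cardinality at most $h(V(G'))/5$: duplication forces every element group of $u$ to be covered by some $c_v$ with $v\in N[u]$ carrying enough weight, and a counting argument over the clique shows the total weight is at least $5$ times the number of such covering choice vertices, whose indices form a dominating set of $G$. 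The two directions together give $\gamma_{[4R]}(G')=5\gamma(G)$. Finally, if an algorithm approximated the M4RDP within $(1-\epsilon)\ln|V(G')|$, then running it on $G'$ and extracting a dominating set would yield, for $G$, a dominating set of size at most $\frac{1}{5}(1-\epsilon)\ln|V(G')|\cdot\gamma_{[4R]}(G')=(1-\epsilon)(1+o(1))\ln|V(G)|\cdot\gamma(G)\le(1-\epsilon')\ln|V(G)|\cdot\gamma(G)$ for a suitable $\epsilon'>0$ and all large instances, contradicting Theorem~\ref{inapprox} unless $P=NP$.

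The step I expect to be the main obstacle is the backward extraction: one must prove that the optimum cannot profit from the intermediate labels $1,2,3,4$ nor from spreading weight across several partially-covering choice vertices, so that the weight is genuinely $5$ per dominator, while at the same time keeping the duplication factor $M$ constant so that the instance stays linear in size and the constant in front of $\ln|V|$ is not degraded.
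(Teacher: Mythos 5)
Your reduction stands or falls with the identity $\gamma_{[4R]}(G')=5\gamma(G)$, and in particular with the backward direction that any 4RDF of $G'$ costs at least $5$ per dominator extracted. This is exactly the step you flag as the main obstacle, and it genuinely fails: the defining inequality $h(N[x])\ge|\{y\in N(x):h(y)\ne 0\}|+4$ allows a zero-labelled element copy to be protected by \emph{several} choice vertices whose labels sum to the threshold (for instance $4+2$, $3+3$, $4+4$ or $2+2+3$), and when such partially weighted choice vertices are shared among many element groups the optimum pays strictly less than $5$ per dominator. Concretely, let $G$ have vertices $v_1,v_2,v_3$, a hub $w$ adjacent to each $v_i$, and for each pair $\{i,j\}$ two private vertices $u_{ij}^1,u_{ij}^2$ adjacent exactly to $v_i$ and $v_j$. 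One checks $\gamma(G)=3$: a size-two dominating set either misses one pair class or leaves some $v_k$ undominated. In your $G'$ assign $h(c_{v_1})=h(c_{v_2})=h(c_{v_3})=4$, $h(c_w)=2$ and $0$ elsewhere. Every copy of $u_{ij}^l$ sees two $4$'s (sum $8\ge 2+4$), every copy of $v_i$ sees a $4$ and the $2$ (sum $6\ge 2+4$), every copy of $w$ sees $4+4+4+2$, and every zero-labelled choice vertex sees clique weight $14\ge 4+4$; so $h$ is a 4RDF of weight $14<15=5\gamma(G)$, and no dominating set of size at most $14/5$ exists, so no extraction rule can succeed. Replacing the three $v_i$'s by $k$ of them drives the ratio $\gamma_{[4R]}(G')/(5\gamma(G))$ down towards $4/5$, and losing a constant factor $5/4$ is fatal at the $(1-\epsilon)\ln|V|$ threshold: a $(1-\epsilon)\ln n$-approximation for M4RDP would then only yield a $\tfrac{5}{4}(1-\epsilon)\ln n$-approximation for \texttt{DOMINATION}, which contradicts nothing in Theorem \ref{inapprox}.

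The paper avoids this trap by abandoning the multiplicative gadget altogether: it attaches a pendant vertex $b_i$ to each $v_i$ and proves the \emph{additive} identity $\gamma_{[4R]}(H)=4|V(G)|+\gamma(G)$ (Claim \ref{minimumcrdf}). There the pendant forces $h(v_i)+h(b_i)\ge 4$ for every $i$, any excess above $4$ localizes precisely at the indices forming a dominating set, and the extraction $T=\{v_i: h(v_i)+h(b_i)\ge 5\}$ charges each dominator exactly one unit beyond the unavoidable $4|V(G)|$; the loss caused by that additive term is then controlled separately in the ratio computation. If you want to salvage a clique-based multiplicative construction, you must add gadgetry that rules out every label pattern whose per-element cost dips below $5$ (the configurations listed above), which duplicating element copies alone does not do.
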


\begin{proof}
We prove the theorem by providing a reduction from the \texttt{DOMINATION} problem to M4RDP which preserves the approximation ratio. That is, an instance $G$ with vertex set $\{v_1,v_2,\ldots,v_{|V(G)|}\}$ of the \texttt{DOMINATION} problem is reduced to a corresponding instance $H$ with set of vertices $V(H)=\{v_1,v_2,\ldots,v_{|V(G)|},b_1,b_2,\ldots,b_{|V(G)|}\}$ and set of edges $E(H)=E(G) \cup \{v_1b_1, v_2b_2,\ldots,v_{|V(G)|}b_{|V(G)|}\}$ of M4RDP, see Figure \ref{gm} for an example of such construction. We first show the following necessary claim.
\begin{figure}[ht]
		\centering
		\includegraphics[scale=0.55]{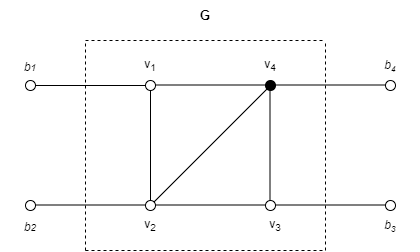}
		\caption{An example of the construction of $H$ from $G$}
		\label{gm}
\end{figure}		
	
\begin{claim-proof}\label{minimumcrdf}
If $H$ is the graph obtained from $G$ as described above, then $\gamma_{[4R]}(H) = 4|V(G)| + \gamma(G)$.
\end{claim-proof}

\noindent
\textbf{Proof of Claim \ref{minimumcrdf}:}
Let $S$	be a dominating set of $G$ with $|S|=\gamma(G)$. We define a function $f : V(H)\rightarrow \{0, 1, 2,3,4,5\}$ with weight $4|V(G)| + \gamma(G)$ as follows.
\begin{equation} %\label{eqtrd}
	f(u)=
	\begin{cases}
	5, & \text{if}\ u=v_i \in  S,\\
    4, & \text{if }u=b_i \text{ for some $i$ such that } v_i \notin S,\\
	0, & \text{otherwise}.
	\end{cases}
\end{equation}
Notice that, if $v_i\notin S$, then there is at least one $v_j\in S$ such that $v_i\in N(v_j)$. Thus, $f(N[v_i])=\sum_{v\in N[v_i]} f(v) = 5|N(v_i)\cap S|+f(v_i) = 5|N(v_i)\cap S|+ 4 > |N(v_i)\cap S|+5 = |\{y:y \in N(v_i) \; \text{and} \; f(y) \neq 0\}|+4$. Also, for any $b_i$ such that $v_i\in S$, we have $f(N[b_i])=\sum_{v\in N[b_i]} f(v) = 5 = |\{y:y \in N(b_i) \; \text{and} \; f(y) \neq 0\}|+4$. Thus, it follows that $f$ is a 4RDF, and so $\gamma_{[4R]}(H) \leq 4|V(G)| + \gamma(G)$.

\smallskip
We now prove that $\gamma_{[4R]}(H) \geq 4|V(G)| + \gamma(G)$. Let $g$ be a 4RDF for the graph $H$. Consider the following function $h$ on $V(H)$ obtained from $g$ and satisfying that $h(V(H))=g(V(H))$.
\begin{equation} %\label{eqtrd}
			h(u)=
			\begin{cases}
				0, & \text{ if }u = b_i, \; g(b_i)=5 \; \text{and} \; g(v_i)=0, \\
				5, & \text{ if }u = v_i, \; g(b_i)=5 \; \text{and} \; g(v_i)=0, \\
				g(u), & \text{otherwise}.
			\end{cases}
\end{equation}	
From the definition of a 4RDF the following constraints hold for each $1\le i\le n$.
\begin{itemize}
    \item[(i)] $h(b_i)+h(v_i) \geq 4$.
    \item[(ii)] If $h(b_i) \neq 4$, then $h(b_i)+h(v_i) \geq 5$.
    \item[(iii)] If $h(v_i) \neq 0$, then $h(b_i)+h(v_i) \geq 5$.
\end{itemize}
Let $V^0=\{v_i : h(b_i)=4  \; \text{and} \; h(v_i)=0\}$. Since $h(v_i)=0$ for all $v_i \in V^0$, from the definition of a 4RDF and the fact that $h(b_i)=4$, it follows that every $v_i \in V^0$ must be adjacent to a vertex with a positive label from the set $R=\{v_1,v_2,\ldots,v_{|V(G)|}\} \setminus V^0$.  It is readily verified that for all $v_i \in R$ it holds, $h(v_i) \neq 0$, and so, $h(b_i)+h(v_i) \geq 5$, if $v_i \in R$. Since each vertex in $V^0$ has a neighbor in $R$, it follows that $R$ is a dominating set of $G$. Moreover, from the constraints (i) and (iii) it follows that $h(V(H))=g(V(H)) \geq 5|R|+4(|V(G)|-|R|)=4|V(G)|+|R|$. Since $|R| \geq \gamma(G)$ it follows that $g(V(H)) \geq 4|V(G)|+\gamma(G)$. Therefore, $\gamma_{[4R]}(H) \geq 4|V(G)| + \gamma(G)$ and the desired equality follows.
\qednew

Having in mind the claim above, we now suppose there exists an approximation algorithm , say $B$, for the M4RDP with approximation ratio $(1 - \epsilon) \ln |V|$ for some fixed $\epsilon > 0$. Now, based on such Algorithm $B$, the following polynomial time approximation algorithm \texttt{DS-Approx} for solving the \texttt{DOMINATION} problem on a graph $G$ is presented. Let $\alpha=(1 - \epsilon) \ln |V|$ for some  $\epsilon > 0$ and  $k>0$ be an integer.

\begin{algorithm} \label{approxdomset}
		\begin{algorithmic}[1]
			\Require{A graph $G$ and an integer $k>0$. }
			\Ensure{A dominating  set $T$ of $G$.}\\
			\textbf{if} there exists a dominating set $T$ of $G$ with $|T| < k$  \textbf{then}\\
			\hspace{0.39cm}return $T$\\
			\textbf{else}\\
			\hspace{0.3cm} Obtain the graph $H$ from $G$ as describe before (Figure \ref{gm} as example) \\
			\hspace{0.3cm} Obtain a 4RDF $g$ of $H$ using algorithm $B$\\
			\hspace{0.3cm} Let $T = \{v_i : g(v_i) + g(b_i) \geq 5\}$ \\
             \hspace{0.3cm} return $T$\\
			\textbf{end if}
		\end{algorithmic}
    \caption{\texttt{DS-Approx}($G$)}%\label{redalgo3}
\end{algorithm}	

If there exists a  dominating set $T$ of $G$ with cardinality $|T| < k$, then it can be computed in polynomial time. If the answer $T$ given by the \texttt{DS-Approx} algorithm has cardinality $|T| < k$, then it is optimal.  Otherwise, let $T^\prime$ be a dominating set of $G$ with $|T^\prime|=\gamma(G)$, and let $f$ be a 4RDF of $H$  with $f(V(H)) = \gamma_{[4R]}(H)$. Clearly $f(V(H)) \geq k$ and $|T| \leq g(V(H)) \leq \alpha\cdot(f(V(H))) \leq \alpha\cdot(4|V(G)| + |T^\prime|)$ = $\alpha\cdot\left(1 + \frac{4|V(G)|}{|T^\prime|}\right)|T^\prime|$. Therefore, \texttt{DS-Approx} approximates a dominating set within a ratio of $\alpha\cdot\left(1 + \frac{4|V(G)|}{|T^\prime|}\right)$. If $\frac{1}{| T^\prime |} < \epsilon/4$, then the approximation ratio becomes $\alpha\cdot\left(1 + \frac{4n}{|T^\prime|}\right) < (1 - \epsilon)(1 + |V(G)|\epsilon) \ln |V(G)| = (1 - \epsilon^\prime) \ln |V(G)|$, where $\epsilon^\prime = |V(G)|\epsilon^2+ \epsilon-|V(G)|\epsilon$.
	%	$\frac{4n\epsilon^2}{2} - \frac{4n\epsilon}{2}+ \epsilon$.
Thus, we deduce that \texttt{DS-Approx} approximates a minimum dominating set within $(1 - \epsilon^\prime) \ln |V|$.
Since \texttt{DS-Approx} is a  $(1 - \epsilon^\prime) \ln |V(G)|$-approximation algorithm, by Theorem \ref{inapprox} and the fact that $\ln (2|V(G)|) \approx \ln |V(H)|$ for $|V(G)| \rightarrow \infty$, unless $P = NP$, M4RDP cannot be approximated within a ratio of $(1 - \epsilon) \ln |V(G)|$ for any $\epsilon > 0$.
\end{proof}

\subsection{Approximation algorithm}

An approximation algorithm with approximation ratio of $ 1+\ln (\Delta(G)+1)$ for the \texttt{DOMINATION} problem on a graph $G$ is known from \cite{clrs}. Let \texttt{Dom-Approx} be one such algorithm. We next propose the approximation algorithm \texttt{4RD-Approx} to solve M4RDP by using the already known \texttt{Dom-Approx} algorithm. Notice that any QRDF $f$ of a graph $G$ can be represented by a $6$-tuple $(V_0,V_1,\dots,V_5)$ such that $V_i=\{v\in V(G)\,:\,f(v)=1\}$ with $0\le i\le 5$, and we consequently may write $f=(V_0,V_1,\dots,V_5)$.

\begin{algorithm}[ht]
	\caption{\texttt{4RD-Approx}($ G $)}\label{redalgo}
	\begin{algorithmic}[1]
		\Require{\mbox{A graph $G$. }}
		\Ensure{\mbox{A quadruple Roman dominating 6-tuple $f=(V_0,V_1,\dots,V_5)$ of $G$.}}\\
		$ S  \gets$ \texttt{Dom-Approx}($ G $) \\
		$ f \gets (V \setminus S, \emptyset, \emptyset, \emptyset, \emptyset,S ) $  \\
		return $ f. $
	\end{algorithmic}
\end{algorithm}

In this \texttt{4RD-Approx} algorithm, a dominating set $S$ of the given graph is first obtained by using the \texttt{Dom-Approx} algorithm, and then we define a function $f$ that assigns the value $5$ to the vertices of $S$, and the remaining vertices are assigned label $0$ under such $f$. Clearly, such $f$ is a 4RDF, and so $f(V(G)) \leq 5(1+\ln (\Delta(G)+1)) \gamma(G)\leq 5 (1+\ln(\Delta+1))\gamma_{[4R]}(G)$. Therefore, the following result is deduced.

\begin{theorem}\label{th30}
\texttt{4RD-Approx} is an approximation algorithm for M4RDP on a graph $G$ with approximation ratio $ 5 (1+\ln (\Delta(G)+1)) $.	
\end{theorem}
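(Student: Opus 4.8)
The plan is to establish the stated approximation ratio for \texttt{4RD-Approx} by combining two elementary facts: that the output $f$ is a valid 4RDF, and that its weight is controlled both from above (via the guarantee of \texttt{Dom-Approx}) and, crucially, relative to the optimum $\gamma_{[4R]}(G)$ from below. First I would verify correctness of the algorithm. Since $S$ is a dominating set returned by \texttt{Dom-Approx}, every vertex $u\in V\setminus S$ has a neighbor in $S$ with label $5$ under $f$; hence $f(N[u])\ge 5\ge |\{y\in N(u):f(y)\neq 0\}|+4$ is not quite immediate if $u$ has several positive neighbors, so the cleaner argument is that a single neighbor labelled $5$ already gives $f(N[u])\ge 5$, and since every vertex of $S$ carries label $5$ we in fact have $f(N[u])=5\cdot|N(u)\cap S|\ge |N(u)\cap S|+4$ because $|N(u)\cap S|\ge 1$ forces $5k\ge k+4$ for all integers $k\ge 1$. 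Thus $f$ is a 4RDF.

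Next I would bound the weight of $f$ from above. By construction $f(V(G))=5|S|$, and by the approximation guarantee of \texttt{Dom-Approx} we have $|S|\le (1+\ln(\Delta(G)+1))\,\gamma(G)$, so that
\[
f(V(G))=5|S|\le 5\bigl(1+\ln(\Delta(G)+1)\bigr)\gamma(G).
\]
The last ingredient is a lower bound relating $\gamma(G)$ to $\gamma_{[4R]}(G)$. The key observation is that $\gamma(G)\le \gamma_{[4R]}(G)$: given any optimal 4RDF, the set of vertices carrying a positive label forms a dominating set (every vertex with label in $\{0,1,2,3\}$ must have $f(N[x])\ge 4$, hence at least one positive neighbor, and vertices with label in $\{4,5\}$ dominate themselves), and its cardinality is at most the total weight $\gamma_{[4R]}(G)$. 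Chaining the two inequalities yields
\[
f(V(G))\le 5\bigl(1+\ln(\Delta(G)+1)\bigr)\gamma(G)\le 5\bigl(1+\ln(\Delta(G)+1)\bigr)\gamma_{[4R]}(G),
\]
which is exactly the claimed approximation ratio $5(1+\ln(\Delta(G)+1))$.

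I expect the only genuine subtlety to be the lower bound $\gamma(G)\le\gamma_{[4R]}(G)$, since the remaining steps are immediate from the algorithm's definition and the cited guarantee for \texttt{Dom-Approx}. Concretely, one must check that the vertices with positive label under an optimal 4RDF really do dominate $G$ and that their count is dominated by the weight; both follow because each positive label contributes at least $1$ to the weight and the 4RDF condition guarantees each zero-labelled vertex has a positive neighbor. Since the excerpt already asserts these bounds in the paragraph preceding the theorem, the proof reduces to recording these verifications, and no hard calculation is required.
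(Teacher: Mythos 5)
Your proposal is correct and follows essentially the same route as the paper: verify that assigning $5$ to the dominating set $S$ yields a 4RDF, bound its weight by $5(1+\ln(\Delta(G)+1))\gamma(G)$ via the \texttt{Dom-Approx} guarantee, and conclude using $\gamma(G)\le\gamma_{[4R]}(G)$. The paper compresses all of this into one line, so your explicit justification of the inequality $\gamma(G)\le\gamma_{[4R]}(G)$ (positively labelled vertices of an optimal 4RDF form a dominating set of size at most the weight) is a welcome, but not divergent, elaboration.
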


\noindent From Theorem \ref{th30} the result below follows.

\begin{corollary}\label{corapx}
	For bounded degree graphs, M4RDP is in APX.
\end{corollary}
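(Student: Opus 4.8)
The plan is to obtain the result as an immediate specialization of Theorem \ref{th30}. First I would recall what it means for a class to have bounded degree: there is a fixed constant $d$ such that $\Delta(G)\le d$ for every graph $G$ in the class. Running the \texttt{4RD-Approx} algorithm supplied by Theorem \ref{th30} then yields, for any such $G$, a polynomial-time computed 4RDF whose weight is within a factor $5(1+\ln(\Delta(G)+1))$ of the optimum. The key observation is that, because $\Delta(G)\le d$ uniformly across the whole class, this ratio is bounded above by the single constant $5(1+\ln(d+1))$, which no longer depends on the particular input graph. Thus the logarithmic factor appearing in the general ratio collapses to a constant for bounded degree inputs.

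Next I would verify the remaining ingredient needed for membership in \textsc{APX}, namely that the M4RDP is a legitimate NP optimization problem. This is already implicit in the remark made at the start of Section \ref{complexity}: given a function $f$ and an integer $s$, one checks in polynomial time whether $f$ is a 4RDF of weight at most $s$, while the optimal weight is polynomially bounded in $|V(G)|$ (indeed $\gamma_{[4R]}(G)\le 5|V(G)|$, by assigning label $5$ to any dominating set). Hence the M4RDP lies in NPO. Combining NPO membership with the polynomial-time constant-factor approximation established in the previous paragraph gives precisely that the M4RDP, restricted to any bounded degree graph class, belongs to \textsc{APX}.

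I do not anticipate any genuine obstacle, since the content is a direct reading of Theorem \ref{th30}. The only point deserving a sentence of care is the quantifier order: one must state explicitly that \emph{bounded degree} provides a uniform bound $d$, so that the approximation ratio becomes a fixed constant simultaneously for all graphs in the class, rather than a quantity that merely depends on each individual graph. Once that is made explicit, the corollary follows.
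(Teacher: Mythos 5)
Your argument is exactly the paper's (the corollary is stated as an immediate consequence of Theorem \ref{th30}): a uniform degree bound $d$ turns the ratio $5(1+\ln(\Delta(G)+1))$ into the fixed constant $5(1+\ln(d+1))$, and your extra check that M4RDP is an NPO problem is a harmless refinement the paper leaves implicit. The proposal is correct and matches the intended proof.
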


\subsection{APX-completeness}

An optimization problem which is in APX and is APX-hard is known as APX-complete. We show that M4RDP is \textit{APX-hard} by giving an \textit{L-reduction} from \texttt{DOMINATION} on graphs with maximum degree 3 (DOM-3) \cite{apxdom}, which is a known APX-complete problem to M4RDP. We also need the following well known result that can be found in \cite{Haynes}.

\begin{theorem}{\em \cite{Haynes}}
\label{th:bound-dom}
For any graph $G$, $\gamma(G) \geq \frac{|V(G)|}{\Delta(G) + 1}$.
\end{theorem}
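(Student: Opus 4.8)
The plan is to prove this classical lower bound by a direct covering (double-counting) argument on closed neighborhoods. First I would fix a \emph{minimum} dominating set $D\subseteq V(G)$, so that $|D|=\gamma(G)$. By the very definition of a dominating set recalled in the excerpt, namely $\bigcup_{u\in D}N[u]=V(G)$, every vertex of $G$ lies in the closed neighborhood of at least one vertex of $D$. Thus the family of closed neighborhoods $\{N[u]:u\in D\}$ constitutes a cover of $V(G)$.

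Next I would bound the size of each member of this cover. For every vertex $v\in V(G)$ we have $|N[v]|=\deg(v)+1\le \Delta(G)+1$, directly from the definition of the maximum degree $\Delta(G)$. Combining the covering property with this uniform bound via subadditivity of cardinality under unions yields
\begin{equation*}
|V(G)|=\Bigl|\bigcup_{u\in D}N[u]\Bigr|\le \sum_{u\in D}|N[u]|\le |D|\cdot(\Delta(G)+1)=\gamma(G)\bigl(\Delta(G)+1\bigr).
\end{equation*}
Rearranging this inequality gives exactly $\gamma(G)\ge \frac{|V(G)|}{\Delta(G)+1}$, as required.

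There is essentially no serious obstacle here; the statement is a one-line counting bound, which is why it is quoted from \cite{Haynes} rather than proved in detail. The only point meriting a word of care is that the closed neighborhoods indexed by $D$ may overlap, so one cannot claim equality $|V(G)|=\sum_{u\in D}|N[u]|$. Using the inequality $\bigl|\bigcup_{u\in D}N[u]\bigr|\le\sum_{u\in D}|N[u]|$ instead, however, works in our favor: the overlaps only make the right-hand side larger, which preserves (indeed strengthens) the direction of the chain of inequalities we need, so the conclusion is unaffected.
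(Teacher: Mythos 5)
Your argument is correct: the closed neighborhoods of a minimum dominating set cover $V(G)$, each has size at most $\Delta(G)+1$, and subadditivity gives $|V(G)|\le \gamma(G)(\Delta(G)+1)$. The paper cites this bound from the literature without proof, and your covering argument is precisely the standard one, so there is nothing to compare or correct.
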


\begin{theorem} \label{apxcmtrd}
	For graphs with maximum degree 4, M4RDP is APX-complete.
\end{theorem}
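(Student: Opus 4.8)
The plan is to establish APX-completeness in two parts. First, membership in APX for graphs of maximum degree $4$ follows immediately from Corollary \ref{corapx}, since bounded-degree graphs constitute exactly the setting where Theorem \ref{th30} yields a constant-factor approximation. The substantive work is the APX-hardness, for which I would give an \emph{L-reduction} from DOM-3 (\texttt{DOMINATION} restricted to graphs of maximum degree $3$), which is known to be APX-complete. The natural construction to use is precisely the one already introduced in the proof of Theorem \ref{inapproxmr3dp}: from a graph $G$ of maximum degree $3$ build $H$ by attaching a single pendant vertex $b_i$ to each $v_i\in V(G)$. Each $v_i$ gains exactly one neighbor and each $b_i$ has degree $1$, so $\Delta(H)\le \Delta(G)+1\le 4$, which is exactly why DOM-3 (rather than the general DOMINATION problem) is the right starting point: it keeps the target instance within maximum degree $4$.

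With the construction fixed, the key structural fact is Claim \ref{minimumcrdf}, already proved in the excerpt, which gives the exact identity
\begin{equation}\label{eq:apxkey}
\gamma_{[4R]}(H)=4|V(G)|+\gamma(G).
\end{equation}
An L-reduction requires two constants $\alpha,\beta>0$ such that (i) $\mathrm{OPT}_{\mathrm{M4RDP}}(H)\le \alpha\cdot \mathrm{OPT}_{\mathrm{DOM}}(G)$, and (ii) any feasible 4RDF $g$ of $H$ can be turned in polynomial time into a dominating set $T$ of $G$ with $|T|-\gamma(G)\le \beta\,\bigl(g(V(H))-\gamma_{[4R]}(H)\bigr)$. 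For part (i) I would combine \eqref{eq:apxkey} with the degree bound of Theorem \ref{th:bound-dom}: since $\Delta(G)\le 3$ we have $|V(G)|\le (\Delta(G)+1)\gamma(G)\le 4\gamma(G)$, whence $\gamma_{[4R]}(H)=4|V(G)|+\gamma(G)\le 16\gamma(G)+\gamma(G)=17\gamma(G)$, giving $\alpha=17$. For part (ii) I would use the solution-lifting map already present in the proof of Theorem \ref{inapproxmr3dp}: from $g$ form the normalized function $h$ (same weight) and set $T=\{v_i : h(b_i)+h(v_i)\ge 5\}$, which that proof shows is a dominating set of $G$ satisfying $g(V(H))=h(V(H))\ge 4|V(G)|+|T|$. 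Subtracting \eqref{eq:apxkey} then yields $|T|-\gamma(G)\le g(V(H))-\gamma_{[4R]}(H)$, so $\beta=1$ works.

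The main obstacle, and the step deserving the most care, is verifying the second L-reduction inequality cleanly for an \emph{arbitrary} feasible (not necessarily optimal) 4RDF $g$, rather than only for an optimal one. In particular I must confirm that the normalization producing $h$ is weight-preserving and that the resulting set $T$ is genuinely dominating even when $g$ is suboptimal or assigns intermediate labels; the constraints (i)--(iii) listed in the proof of Theorem \ref{inapproxmr3dp} are exactly what guarantee this, so I would reuse them verbatim and emphasize that the bound $g(V(H))\ge 4|V(G)|+|T|$ holds for every feasible $g$. Once both inequalities are in hand with $\alpha=17$ and $\beta=1$, the L-reduction is complete, and combined with the APX membership from Corollary \ref{corapx} this establishes that M4RDP is APX-complete on graphs of maximum degree $4$, proving the theorem.
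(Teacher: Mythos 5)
Your proposal matches the paper's proof essentially step for step: APX membership via Corollary \ref{corapx}, an L-reduction from DOM-3 using the pendant-vertex construction of Theorem \ref{inapproxmr3dp}, the identity of Claim \ref{minimumcrdf} combined with Theorem \ref{th:bound-dom} to obtain the first L-reduction inequality with constant $17$, and the solution-lifting map $T=\{v_i : h(v_i)+h(b_i)\ge 5\}$ to obtain the second with constant $1$. The argument is correct and there is nothing of substance that differs from the published proof.
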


\begin{proof}
From Corollary \ref{corapx}, it follows that M4RDP belongs to APX for bounded degree graphs. Next an L-reduction from DOM-3 to M4RDP is provided. An instance $G=(V,E)$ of DOM-3 is reduced to a corresponding instance $H=(V',E')$ of M4RDP as given in the proof of Theorem \ref{inapproxmr3dp}. Notice that if $\Delta(G)=3$, then it holds $\Delta(H)=4$. First, let $S^\prime$ be  a dominating set of $G$ with $|S^\prime|=\gamma(G)$, and let $f$ be a 4RDF of $H$ with weight $\gamma_{[4R]}(H)$, as described in the proof of Claim \ref{minimumcrdf}. By using Claim \ref{minimumcrdf},
\begin{equation}
\label{eq:f-S'}
f(V(H)) = 4|V(G)|+\gamma(G)=4|V(G)| + |S'|
\end{equation}
Also, by using Theorem \ref{th:bound-dom}, we know that $|V(G)|\le 4\gamma(G)=4|S'|$. Hence, we deduce that from \eqref{eq:f-S'},
\begin{equation}
    \label{eq:gamma-S'}
    f(V(H)) = 4|V(G)|+\gamma(G)=4|V(G)| + |S'|\le 17|S'|.
\end{equation}

Let $g$ be a 4RDF on $V(H)$. Similarly to the proof of Claim \ref{minimumcrdf}, in the proof of Theorem \ref{inapproxmr3dp}, it can be shown that the set $S = \{v_i : h(v_i) + h(b_i) \geq 5\}$ is a dominating set of $G$. Since $g(b_i) + g(v_i) \geq 4$, it follows that $|S| \leq g(V(H)) - 4|V(G)|$. Hence, by using \eqref{eq:f-S'}, we deduce
\begin{equation}
\label{eq:diff-f-g}
|S| - |S'|\leq g(V(H)) - 4|V(G)| - |S'| \leq g(V(H)) - f(V(H)).
\end{equation}
Finally, from the two inequalities \eqref{eq:gamma-S'} and \eqref{eq:diff-f-g}, we conclude that there exists an L-reduction from DOM-3 to M4RDP, which implies that M4RDP is APX-hard. Therefore, M4RDP is APX-complete.
\end{proof}

\section{Integer Linear Programming Formulation}\label{sec:ILP}

\noindent
Motivated by the Integer Linear Programming (ILP) formulations given for different domination problems \cite{ILP5,ILP3,ILP1,ILP2,ILP4}, in this section we present a basic  ILP formulation for M4RDP using six sets of binary variables given below for each vertex $u$ of a graph $G$ based on a 4RDF $h$ of $G$.

\medskip\noindent
$a_u =
\begin{cases}
1, &  h(u) = 0,\\
0, &  $otherwise$.
\end{cases}$
\hspace{1cm} $b_u =
\begin{cases}
1, &  h(u) = 1,\\
0, &  $otherwise$.
\end{cases}$
\hspace{1cm} $c_u =
\begin{cases}
1, &  h(u) = 2,\\
0, &  $otherwise$.
\end{cases}$\\
\smallskip \\
\hspace{0.5cm} $d_u =
\begin{cases}
1, &  h(u) = 3,\\
0, &  $otherwise$.
\end{cases}$
\hspace{1cm} $e_u =
\begin{cases}
1, &  h(u) = 4,\\
0, &  $otherwise$.
\end{cases}$
\hspace{1cm} $f_u =
\begin{cases}
1, & h(u) = 5,\\
0, &  $otherwise$.
\end{cases}$

\medskip
The  following is an ILP model for  M4RDP using six variables defined above.

\medskip\noindent
Determine:
\begin{equation} \label{equation51}
\min \left\{\sum_{u \in V}(b_u + 2c_u+3d_u+4e_u+5f_u) \right\}
\end{equation}
subject to the constraints:
\begin{equation} \label{equation52}
(1-(a_u + b_u + c_u+d_u)) + 3e_u + 4f_u + \displaystyle\sum_{v \in N[u]}b_v + 2c_v+3d_v+4e_v+5f_v \geq \\  4+\sum_{v \in N(u)}b_v + c_v+d_v+e_v+f_v  , \; u \in V
\end{equation}
%\end{equation}
\begin{equation} \label{equation53}
a_u + b_u+c_u+d_u+e_u+f_u \leq 1, \; u \in V
\end{equation}
\begin{equation} \label{equation54}
a_u, b_u, c_u, d_u, e_u, f_u \in \{0,1\}
\end{equation}

The weight of a 4RDF of $G$ is minimized using Equation \ref{equation51}. Constraint \ref{equation52} ensures that each vertex with label in the set $\{0,1,2,3\}$ is adjacent to vertex/vertices with required labels. The condition that each vertex is assigned a unique label is guaranteed by  constraint \ref{equation53} and the condition that variables are binaries is guaranteed by constraint  \ref{equation54}. The number of variables is $6|V(G)|$ and the number of constraints is $2|V(G)|$ in the proposed ILP model for M4RDP.

\section{Concluding remarks}

Once we have presented several contributions on the quadruple Roman domination of graphs, we have detected a few possible research lines that can be considered as continuation of our work, some of them are next stated.
\begin{itemize}
    \item We have  shown that the 4RDP is NP-complete for (star or comb) convex bipartite graphs, and clearly, this means that this happens for tree convex bipartite graphs in general. However, we wonder on whether there is a specific tree $T'$, such that the 4RDP can be efficiently solved for the class of $T'$ convex bipartite graph.
    \item Threshold graphs are special case of the well known co-graphs. Theorem \ref{th:threshold} shows that finding $\gamma_{4R}(G)$ can be easily computed when $G$ is a threshold graph. In this sense, we wonder on whether the same fact occur when $G$ is a co-graph.
    \item Theorem \ref{th:efficient} shows lower and upper bound for $\gamma_{4R}(G)$ in terms of $\gamma(G)$. We have given an infinite number of graphs where these bounds are achieved. Thus, it seems worthy of characterizing all the graphs for which such bounds are tight.
    \item In Section \ref{sec:ILP} we present an ILP formulation for the M4RDP. In connection with this, a natural step forward consist of making some implementations of the model, and introducing some heuristics to solve the M4RDP on random graphs for instance.
\end{itemize}

\section*{Acknowledgements}
We first would like to thank the reviewers for their careful reading of the manuscript and comments, which have improved the quality of the paper.

This investigation was completed while P.V.S. Reddy was visiting the University of C\'adiz, Spain, supported by Science and Engineering Research Board, DST, Government of India, Grant Number SIR/2022/001151. I.\ G.\ Yero has been partially supported by the Spanish Ministry of Science and Innovation through the grant PID2022-139543OB-C41. Moreover, I.G. Yero also thanks the support of the program ``Ayudas para la recualificaci\'on del sistema universitario espa\~{n}ol para 2021-2023, en el marco del Real Decreto 289/2021, de 20 de abril de 2021'' to make a temporary stay at Rovira i Virgili University, Spain.

\section*{Author contributions statement} All authors contributed equally to this work.

\section*{Conflicts of interest} The authors declare no conflict of interest.

\section*{Data availability} No data was used in this investigation.

%We are grateful to Prof. S. Arumugam sir, Adjunct Professor, Amrita Vishwa Vidyapeetham Coimbatore, India for his guidance.

%%-----------------------------

\section*{Appendix}

\noindent
\textbf{Proof of Claim \ref{claim-2}:}
Suppose that the number of vertices of type $x_i$ or $x'_i$ having positive label is $m\geq 1$. Hence, the number of vertices of type $x_i$ or $x'_i$ with label equal to zero is $6r-m$. Thus, one of the following situations occurs.
\begin{itemize}
    \item[(i)] $g(x_i) \neq 0$ or $g(x'_i) \neq 0$, which means $g(x_i)+g(y_i) \geq 5$ or $g(x'_i)+g(y'_i) \geq 5$, respectively.
    \item[(ii)] $g(x_i) = 0$ or $g(x'_i) = 0$, which leads to $g(y_i) \geq 4$ or $g(y'_i) \geq 4$, respectively.
\end{itemize}
Now, let $a = |\{y_i\,:\,g(y_i) = 4 \; \text{and} \; 1 \leq i \leq 3r\}\cup \{y'_i\,:\,g(y'_i) = 4 \; \text{and} \; 1 \leq i \leq 3r\}|$. Clearly, for each $x_i$ with $g(x_i) = 0$ and $g(y_i) = 4$, and for each $x'_i$ with $g(x'_i) = 0$ and $g(y'_i) = 4$, there must be a vertex $C_j\in N(x_i)$ with $g(C_j) \geq 2$. Thus, the required number of vertices of type $C_j$ with $g(C_j) \geq 2$ is $\left\lceil \frac{a}{3} \right\rceil$. Notice that each $C_j$ has only three neighbors of type $x_i$, and that every $x'_i$ is adjacent to every $C_j$. Thus, this number $\left\lceil \frac{a}{3} \right\rceil$ is not influenced by the vertices of type $x'_i$. Consequently,
$$g(V(H_{r,s})) \geq 5+5m+4a+5(6r-m-a)+2\left\lceil \frac{a}{3} \right\rceil=28r+5+2r-\frac{a}{3}.$$
Since $m \geq 1$ and $a \leq 6r-m$ it follows that $2r-\frac{a}{3} >0$, and so, from the inequality above, we deduce $g(V) \geq 28r+5+2r-\frac{a}{3} > 26r+5$, which is a contradiction. Thus, the proof of Claim \ref{claim-2} is completed.\qednew

\bigskip
\noindent
\textbf{Proof of Claim \ref{claim-3}:}
From Claim \ref{claim-2}, it follows that  $g(y_i) \geq 4$ for every $1 \leq i \leq 3r$. Suppose there exist $y_k$ such that $g(y_k) = 5$, and let us count such vertices. Consider a vertex $x_i$ with $1 \leq i \leq 3r$. Since $g(x_i)=0$ (by Claim \ref{claim-2}) and $g(y_i) \geq 4$, we deduce that
$$g(y_i) + \sum_{x_i\in C_j} g(C_j) = \sum_{v\in N[x_i]} g(v) = g(N[x_i])\ge |\{y:y \in N(x_i) \; \text{and} \; g(y) \neq 0\}|+4.$$

Note that any $x_i$ such that $g(C_j)=0$ for every $C_j$ with $x_i\in C_j$, satisfies that $g(y_i)= 5$. Moreover, for any vertex $x_i$ such that $g(C_j)\ne 0$ for some $C_j$ with $x_i\in C_j$ it holds that either $g(C_j)\ge 2$, or ($g(C_j)=1$ and $g(y_i)=5$). For otherwise (that is $g(C_j)=1$ and $g(y_i)=4$), we have $g(N[x_i]) = \sum_{v\in N[x_i]} g(v)=g(y_i)+g(C_j)=5<6 = |\{y:y \in N(x_i) \; \text{and} \; g(y) \neq 0\}|+4$, which is a contradiction. In this sense, since $g$ has weight at most $26r+5$, $g(b)=5$, $g(y_i) \geq 4$ and $g(y'_i) \geq 4$ for every $1 \leq i \leq 3r$, the number of vertices of type $y_i$ such that $g(y_i)=5$ is at most $K\le 2r$. This means there must be at least $R\ge r$ vertices $y_j$ such that $g(y_j)=4$. But then, the corresponding vertices $x_j$ adjacent to such $y_j$ (with $g(y_j)=4$) must have neighbors $C_k$ with $g(C_k)\ge 2$. Altogether, this leads to say that $g$ has weight $g(V(H_{r,s}))\ge 12r+5+5K+4R+2\left\lceil R/3\right\rceil=5+12r+5(3r-R)+4R+2\left\lceil R/3\right\rceil=5+26r+r-\left\lceil R/3\right\rceil>5+26r$, since $R\ge r$, and this is a contradiction. Hence, we have that for every $y_i$, with $1 \leq i \leq 3r$, it follows $g(y_i) = 4$ as required. \qednew

% Numbered list
% Use the style of numbering in square brackets.
% If nothing is used, default style will be taken.
%\begin{enumerate}[a)]
%\item
%\item
%\item
%\end{enumerate}

% Unnumbered list
%\begin{itemize}
%\item
%\item
%\item
%\end{itemize}

% Description list
%\begin{description}
%\item[]
%\item[]
%\item[]
%\end{description}

%% Loading bibliography style file
%\bibliographystyle{model1-num-names}
% \bibliographystyle{cas-model2-names}

% Loading bibliography database
% \bibliography{cas-refs}

\begin{thebibliography}{99}
\bibliographystyle{cas-model2-names}

\bibitem{trd} H.A. Ahangar, M.P. \'Alvarez, M. Chellali, S.M. Sheikholeslami, J.C. Valenzuela-Tripodoro, Triple Roman domination in graphs, \textit{Applied Math. Comp.} \textbf{391} (2021) Paper 125444.		
			
\bibitem{apxdom} P. Alimonti, V. Kann, Some APX-completeness results for cubic graphs, \textit{Theor. Comput. Sci.} {\bf 237} (2000) 123--134.
		
\bibitem{qrdp}	J. Amjadi, N. Khalili, Quadruple Roman domination in graphs, {\it Discrete Math. Algorithms Appl.} {\bf 4}(3) (2021) Paper 2150130.

\bibitem{Biggs} N. Biggs, Perfect codes in graphs, \textit{J. Combin. Theory Ser. B} \textbf{15} (1973) 289--296.

\bibitem{Bresar} B. Bre\v{s}ar, M.A. Henning, S. Klav\v{z}ar, D.F. Rall, \textit{Domination games played on graphs}. Berlin: Springer (2021).

\bibitem{ILP5} Q. Cai, N. Fan, Y. Shi, S.Yao, Integer linear programming formulations for double Roman domination problem, \textit{Optimization Methods and Software} \textbf{37}(1) (2022) 1--22.

\bibitem{Chartrand}  G. Chartrand, T.W. Haynes, M.A. Henning, P. Zhang, \textit{From domination to coloring: Stephen Hedetniemi's graph theory and beyond}. Springer Nature. (2019).

\bibitem{Chellali}  M. Chellali, T.W. Haynes, S.T. Hedetniemi, A.A. McRae, Roman \{2\}- Domination, \textit{Discrete Appl. Math.} \textbf{204} (2016) 22--28.

\bibitem{RD1} M. Chellali, N.J. Rad, S.M. Sheikholeslami,  L. Volkmann, Varieties of Roman Domination II, \textit{AKCE Int. J. Graphs Comb.} \textbf{17}(3) (2020) 966-–984.

\bibitem{RD2} M. Chellali, N.J. Rad, S.M. Sheikholeslami,  L. Volkmann, {\it Varieties of Roman Domination}, In: Haynes, T.W., Hedetniemi, S.T., Henning, M.A. (eds) Structures of Domination in Graphs. Developments in Mathematics, Springer, Cham, 66 (2021).

\bibitem{inapproxalg} M. Chleb{\'\i}k, J. Chleb{\'\i}kov{\'a}, Approximation hardness of dominating set problems in bounded degree graphs, \textit{Inf. Comput.} {\bf 206} (2008) 1264--1275.

\bibitem{Cockayne} E.J. Cockayne, P.A. Dreyer, S.M. Hedetniemi, S.T. Hedetniemi, Roman domination in graphs, \textit{Discrete Math.} \textbf{278} (2004) 11-22.

\bibitem{ILP3} P. Duraisamy, S. Esakkimuthu, Linear programming approach for various domination parameters, \textit{Discrete Math. Algor. Appl}. \textbf{13}(1) (2021) 2050096.
	
\bibitem{ci} M.R. Garey, D.S. Johnson, \textit{Computers and Interactability : A Guide to the Theory of NP-completeness}, Freeman, New York (1979).

\bibitem{hhh-1} T.W. Haynes, S.T. Hedetniemi, M.A. Henning, \textit{Domination in Graphs: Core Concepts}. Springer, New York, (2022).

\bibitem{hhh-2} T.W. Haynes, S.T. Hedetniemi, M.A. Henning (editors), \textit{Structures of domination in graphs} (Vol. 66). Cham: Springer (2021).

\bibitem{hhh} T.W. Haynes, S.T. Hedetniemi, M.A. Henning (editors), \textit{Topics in Domination in Graphs}, Switzerland:  Springer International Publishing, 2020.
	
\bibitem{Haynes2} T.W. Haynes, S. Hedetniemi and P. Slater, \textit{Domination in Graphs: Advanced Topics}. (Marcel Dekker, 1997).

\bibitem{Haynes} T.W. Haynes, S. Hedetniemi, P. Slater, \textit{Fundamentals of Domination in Graphs}, (CRC Press, 1998).	
	
\bibitem{compdiff1} M.A. Henning, A. Pandey, Algorithmic aspects of semitotal domination in graphs, {\it Theor. Comput. Sci.} {\bf 766} (2019) 46--57.

\bibitem{Hen-Yeo}  M.A. Henning, A. Yeo, \textit{Total domination in graphs}. New York: Springer (2013).
			
\bibitem{jiang}	W. Jiang, T. Liu, T. Ren, K. Xu, Two hardness results on feedback vertex sets, In FAW-AAIM, (2011) 233--243.	
		
\bibitem{qrdp1} Z. Kou, S. Kosari, G. Hao, J. Amjadi, N. Khalili, Quadruple Roman domination in trees, {\it Symmetry} {\bf 13} (2021) Paper 1318.

\bibitem{clrs} C. E. Leiserson, R.L. Rivest, T.H. Cormen, C. Stein, \textit{Introduction to algorithms}, (MIT press Cambridge, MA, 2001).

\bibitem{threshold} N. Mahadev, U. Peled, \textit{Threshold graphs and related topics}, Elsevier (1995).

\bibitem{ILP1} I. Marija, A mixed integer linear programming formulation for restrained Roman domination problem, Theory and Applications of Mathematics \& Computer Science TAMCS, 5, (2015).

\bibitem{ILP2} I. Marija, Improved integer linear programming formulation for weak Roman domination problem, \textit{Soft computing} \textbf{22}(19) (2018) 1432--7643.

\bibitem{planarx3c} A. Oganian, D. Josep, On the complexity of optimal microaggregation for statistical disclosure control, \textit{Statistical Journal of the United Nations Economic Commission for Europe} \textbf{18} (2001) 345--353.

\bibitem{Ore} O. Ore, Theory of Graphs, \textit{Amer. Math. Soc. Colloq. Publ.} \textbf{38} Providence, (1962).

\bibitem{ILP4} D. Pandiaraja, E. Shanmugam, R. Sivakumar, Linear programming formulation for some generalized domination parameters, \textit{Discrete Math. Algor. Appl}. \textbf{13}(4)
(2021) Paper 2150044.	

\bibitem{lr} C.H. Papadimitriou, M. Yannakakis, Optimization, approximation, and complexity classes, {\it J. Comput. Syst. Sci.} \textbf{43} (1991) 425--440.

\bibitem{Revelle2000} C.S. Revelle, K.E. Rosing, Defendens imperium romanum: a classical problem in military strategy, \emph{American Mathematical Monthly} \textbf{107} (7) (2000) 585--594.

\bibitem{Stewart1999} I. Stewart, Defend the Roman Empire, \emph{Scientific American} \textbf{28}(6) (1999) 136--139.

\bibitem{Valen} J.C. Valenzuela-Tripodoro, M. A. Mateos-Camacho, M. Cera and M.
P. Alvarez-Ruiz, Further results on the $[k]$-Roman domination in graphs, Bull. Iranian Math. Soc. \textbf{50} (2024) Paper 27.

\bibitem{west} D.B. West, \textit{Introduction to graph theory}, Upper Saddle River: Prentice Hall (2001).

\end{thebibliography}

% Biography
%\bio{}
% Here goes the biography details.
%\endbio

%\bio{pic1}
% Here goes the biography details.
%\endbio

\end{document}